\newcommand*{\Scale}[2][4]{\scalebox{#1}{$#2$}}%
\newcommand{\reals}{\mathbb{R}}
\newcommand{\set}[1]{\{#1\}}
\newcommand{\norm}[1]{\lVert#1\rVert}
\newcommand{\half}{\Scale[0.5]{\tfrac{1}{2}}}
\newcommand{\nhalf}{\Scale[0.5]{-\tfrac{1}{2}}}
\renewcommand{\brack}[1]{\langle#1\rangle}
\newcommand{\inner}[2]{\left(#1, #2\right)}
\newcommand{\la}[1]{\ensuremath{\mathsf{#1}}}
\newcommand{\inv}[1]{\ensuremath{{#1}^{\Scale[0.5]{-1}}}}
\newcommand{\transp}[1]{\ensuremath{{#1}^{\Scale[0.5]{\top}}}}
\newcommand{\dual}[1]{#1^{\prime}}
\DeclareMathOperator{\spn}{span}
\renewcommand{\inf}{\mathop{\@inf\vphantom{\@sup}}}
\renewcommand{\sup}{\mathop{\@sup\vphantom{\@inf}}}
\newcommand{\@inf}{\operatorname*{inf}}
\newcommand{\@sup}{\operatorname*{sup}}
\numberwithin{example}{section}
\newtheorem{remark}{Remark}
\numberwithin{remark}{section}
\newtheorem{conjecture}{Conjecture}
\numberwithin{conjecture}{section}
\title{
Preconditioning trace coupled 3$d$-1$d$ systems using fractional Laplacian 
  \thanks{
Submitted to Numerical Methods for Partial Differential Equations
}
} 
\author{
Miroslav Kuchta \footnotemark[2] \and
Kent-Andre Mardal \footnotemark[2] \footnotemark[3] \and
Mikael Mortensen \footnotemark[2]
}
\begin{document}
\renewcommand{\thefootnote}{\fnsymbol{footnote}}
\footnotetext[2]{Department of Mathematics, Division of Mechanics, University of Oslo
\texttt{\{mirok, kent-and, mikaem\}@math.uio.no}
}
\footnotetext[3]{Center for Biomedical Computing, Simula Research Laboratory, 
}
\renewcommand{\thefootnote}{\arabic{footnote}}

\maketitle
\slugger{SISC}{xxxx}{xx}{x}{x--x}

\begin{abstract}
Multiscale or multiphysics problems often involve coupling of partial differential 
equations posed on domains of different dimensionality. 
In this work we consider a simplified model problem of a 3$d$-1$d$ coupling and the
main objective is to construct algorithms that may utilize standard multilevel 
algorithms for the 3$d$ domain, which has the dominating computational complexity.  
Preconditioning for a system of two elliptic problems posed, respectively, in a
three dimensional domain and an embedded one dimensional curve and coupled by 
the trace constraint is discussed. Investigating numerically the properties of the
well-defined \emph{discrete} trace operator, it is found that negative fractional Sobolev
norms are suitable preconditioners for the Schur complement of the system. The norms are employed 
to construct a robust block diagonal preconditioner for the coupled problem.    
\end{abstract}

\begin{keywords}preconditioning, saddle-point problem, Lagrange multipliers, trace\end{keywords}

\begin{AMS}65F08\end{AMS}

\pagestyle{myheadings}
\thispagestyle{plain}
\markboth{Preconditioning for 3$d$-1$d$ coupled problems}{}

\section{Introduction}\label{sec:intro}

Let $\Omega$ be a bounded domain in 3$d$,  while $\Gamma$ represents a
1$d$ structure inside $\Omega$, and consider the following coupled problem    
\begin{subequations}
\label{eq:pde}
\begin{align}
\label{eq:pde1}
  -\Delta u + u + p\delta_{\Gamma} &= f &\mbox{ in } \Omega,\\
\label{eq:pde2}
-\Delta v + v - p &= g &\mbox{ on } \Gamma,\\
\label{eq:coupling}
T u - v  &=  h &\mbox{ on } \Gamma.
\end{align}
\end{subequations}
Here the term $p\delta_{\Gamma}$ is to be understood as a Dirac measure such that 
$\int_{\Omega}p(x)\delta_{\Gamma}w(x)\,\mathrm{d}x=\int_{\Gamma}p(t)w(t) \,\mathrm{d}t$
for a continuous function $w$. 
We remark that from a mathematical point of
view the trace $T$ of $u$ required in \eqref{eq:coupling} is in the continuous case 
not well-defined unless the functions are sufficiently regular. For simplicity of
implementation the system shall be considered with homogeneous Neumann boundary conditions.

The system \eqref{eq:pde} is relevant in numerous biological applications where the
embedded (three dimensional) structure is such that order reduction techniques 
can be used to capture its response by a one dimensional model. Equation
\eqref{eq:pde1} then models processes in the bulk, while \eqref{eq:coupling} is
the coupling between the domains. A typical example of such a system is a vascular 
network surrounded by a tissue and the order reduction is due to the employed assumption of 
radii of the arteries being negligible in comparison to their lengths. To list a
few concrete applications, the 3$d$-1$d$ models have been used, e.g., in 
\cite{grinberg, linninger, fang, reichold} to study blood and oxygen transport in 
the brain or in \cite{cattaneo} to describe fluid exchange between microcirculation and 
tissue interstitium. Efficiency of cancer therapies delivered through
microcirculation was studied in \cite{cattaneo_tumor}, and hyperthermia as a cancer treatment in \cite{nabil}. We note that the employed
models are more involved than \eqref{eq:pde}, but that 
the system still qualifies as a relevant model problem. 

Due to the Dirac measure term and the three-to-one dimensional trace operator, the
problem \eqref{eq:pde} is not standard and establishing its well-posedness is a
delicate issue. In fact, considering \eqref{eq:pde1} with a known $p$ and homogeneous 
Dirichlet boundary conditions, the equation is not solvable in $H_0^1(\Omega)$, as 
$\nabla u$ may be unbounded in the neighborhood of $\Gamma$. 
A similar problem was studied in \cite{quarteroni}, where two elliptic problems
were coupled via a Dirac measure source term, and a unique weak solution was
found using weighted Sobolev spaces. In particular, the weighted spaces that include
a distance function ensured that the trace could be
defined as a bounded operator. A corresponding finite element method (FEM) for the problem was
discussed in \cite{dangelo}, where optimal convergence in the weighted Sobolev norm
was shown using graded meshes.
Optimal convergence of FEM with regular meshes is proved in \cite{koppl} and \cite{koppl2016local}
for the elliptic problems with point singular data and line singular data and
the $L^2$ norm outside of the fixed neighborhood of the singularity. Therein, the existence
of the weak solution relies on spaces $W^{1, p}(\Omega)$, $1\leq p < 2$.

We remark that the weighted Sobolev spaces in \cite{dangelo, quarteroni}
and the Sobolev spaces $W^{1, p}(\Omega)$ in \cite{koppl, koppl2016local} were introduced
in the analysis of the continuous problems, however, standard finite elements
were used in the implementation.

While standard finite element methods provide accurate discretization in the alternative norms
of the above mentioned nonstandard Sobolev spaces, this does not imply that standard preconditioning
algorithms will be efficient. In fact, as described e.g. in \cite{kent_ragnar}, the construction of preconditioners
is deeply connected with the mapping properties of the underlying continuous differential operators
and to the authors knowledge the efficiency in the Sobolev spaces with distance functions have not been analyzed.   
Hence, the use of the weighted Sobolev spaces
has prevented the construction of efficient solution algorithms
and the more application oriented works 
\cite{cattaneo_tumor, cattaneo, nabil}, that build on the analysis in \cite{dangelo, quarteroni}, 
relied on incomplete LU preconditioning. 
To resolve this problem, we have in this paper taken an 
alternative approach where standard multilevel algorithms for elliptic problems are reused for the
3$d$ problem. This approach does however require that novel algorithms are developed for the 1$d$ problem. 
The special construction of algorithms for the 1$d$ problem is justified by the fact
that, in general, the computational complexity of a 1$d$ problem is low compared to 
a 3$d$ problem. We shall illustrate this fact by several numerical experiments.  

The current paper is an extension of \cite{2d-1d}, where a system similar to 
\eqref{eq:pde1}--\eqref{eq:coupling} was analyzed for the case $\Omega$ a bounded domain in 
2$d$ and $\Gamma$ a structure of codimension one. Therein, robust preconditioners were established, based 
on the operator preconditioning framework \cite{kent_ragnar}, in which preconditioners are
constructed as approximate Riesz mappings in properly chosen Hilbert spaces. The framework
often allows for construction of order-optimal preconditioners, with convergence independent of  
material and discretization parameters, directly from the analysis of the continuous system of equations.
In particular, in \cite{2d-1d} it was shown that the proper preconditioning
relied on a nonstandard fractional $H^{\nhalf}$ inner product. Crucial for the analysis was
the fact that the trace operator $T$ is a well-defined mapping between $H^1(\Omega)$ and 
$H^{\half}(\Gamma)$, when $\Gamma$ is of codimension one with respect to $\Omega$. 

The case when the trace operator $T$ maps functions defined on $\Omega$ to $\Gamma$ and $\Gamma$ is of codimension
two is challenging as the properties of the trace operator are not established from a theoretical point of view.
If we assume some additional regularity such that $u\in H^{1+\epsilon}$ then $Tu\in H^\epsilon$ for $\epsilon>0$,
see e.g. \cite{ding1996proof}. However, the result is known to break down in the limit when $\epsilon=0$. 
We therefore propose to weaken the requirements on $T$ and instead consider
$T$ as a mapping between  $H^{1}(\Omega)$ and $H^{s}(\Gamma)$ for some $s<0$ that will be determined.   
To investigate the existence of such a $s$ we perform a comprehensive numerical study with 
various discretizations; that is, finite element methods with conforming/non-conforming elements and matching/non-matching meshes, 
and by considering the Galerkin method with eigenbasis of Laplace operator. We demonstrate that all of these
different methods point to the construction of the same preconditioning operator, namely
$(-\Delta)^{s}$,  where $s\in\left(-0.2, -0.1\right)$ and the range seems to be independent
  of the discretization method.
We demonstrate numerically that this choice defines a good preconditioner
for problems with complex 1$d$ geometries and for 3$d$ meshes that 
are both highly refined or rather coarse close to the 1$d$ mesh as long
as the mesh is shape-regular and the discrete problem is invertible.

Our work is structured as follows. In \S\ref{sec:prelims} the theoretical
background is presented. Section \ref{sec:trace} discusses numerical experiments
using spectral and finite element discretizations that identify suitable norms 
for the discrete 3$d$-1$d$ trace operator. In \S\ref{sec:coupled} the identified norms are
employed to construct optimal preconditioners for coupled model 3$d$-1$d$ problems
discretized with FEM and matched discretizations of $\Omega$ and $\Gamma$. In \S\ref{sec:unmatch} 
this restriction is lifted, the corresponding inf-sup condition is discussed, and
we present numerical experiments that suggest the identified norms lead to
good preconditioners. Finally, conclusions are summarized in \S\ref{sec:conclusions}.

\section{Notation and preliminaries}\label{sec:prelims}
Let $X$ be a Hilbert space of functions defined on a domain $D\subset\reals^d$,
$d=1, 2, 3$. The norm of the space is denoted by $\norm{\cdot}_X$, while
$\brack{\cdot, \cdot}_{\dual{X}, X}$ is the duality pairing between $X$ and its
dual space $\dual{X}$. We let $(\cdot, \cdot)_X$ denote the inner product of
$X$, while, to simplify the notation, $(\cdot, \cdot)_D$ is the $L^2$ inner product.
The Sobolev space of functions with $m$ square integrable derivatives is
$H^m(D)$. Finally, $H_0^m(D)$ denotes the closure of the space of smooth functions 
with compact support in $D$ in the $H^m(D)$ norm. We will also employ 
Sobolev spaces with fractional derivatives, which are more precisely defined later.  

We use normal capital font to denote operators over infinite dimensional spaces,
e.g. $A:X\rightarrow \dual{X}$.
If $A:X \rightarrow Y$ is a bounded operator we let $\dual{A}:\dual{Y}\rightarrow\dual{X}$
denote the adjoint operator $\brack{\dual{y}, Ax}_{\dual{Y}, Y}=\brack{\dual{A}y, x}_{\dual{X}, X}$, $y\in\dual{Y}$, $x\in X$.
For a discrete subspace $X_h\subset X$, $\dim
X_h=n$, the subscript $h$ is used to distinguish the finite dimensional operator due 
to the Galerkin method, e.g., $A_h:X_h\rightarrow\dual{X_h}$ defined by
\[
  \brack{A_h u_h, v_h}_{\dual{X}, X} = 
  \brack{A u, v_h}_{\dual{X}, X}\quad u_h, v_h\in X_h\mbox{ and }u\in X.
\]
For a given basis, $\set{\phi_i}_{i=1}^n$ of $X_h$, the matrix representation of
the operator is denoted by sans serif font. Thus $A_h$ is represented by 
$\la{A}\in\reals^{n\times n}$ with entries
\[
  \la{A}_{i, j} = \brack{A_h \phi_j, \phi_i}_{\dual{X}, X}.
\]
The function $u_h\in X_h$ is represented in the basis by a coefficient vector
$\la{u}\in\reals^n$, where $u_h=\la{u}_i\phi_i$ (summation convention invoked).
Finally, for the inner product of vectors $\la{u}, \la{v}$ in $\reals^n$
shall be denoted as $\transp{\la{u}}\la{v}$.

\subsection{Properties of the trace operator}\label{sec:trace_theory}
We consider $\Omega\subset\reals^d$ an open connected domain
with Lipschitz boundary $\partial\Omega$ and $\Gamma$ a Lipschitz submanifold of 
codimension one or two in $\Omega$. The trace operator $T$ is defined by 
$T u = u|_\Gamma$ for $u\in C(\overline\Omega)$. 

In case the codimension of $\Gamma$ is one, the properties of the trace operator
are well known. In particular, $T:H^s(\Omega)\rightarrow H^{s-\half}(\Gamma)$ is
bounded and surjective, see, e.g., \cite[ch. 7]{AdamsFournier} for $s>\tfrac{1}{2}$.
As a direct consequence we then have that the trace to $\Gamma$ of codimension
two is well behaved as mapping from $H^{1+\epsilon}(\Omega)$ to $H^{\epsilon}(\Gamma)$
for any $\epsilon > 0$, cf.
\cite{renardy2006introduction} and \cite{ding1996proof} for the case of
unbounded and bounded domains respectively.
However, for $\epsilon=0$, it is known that the trace operator is unbounded 
as a mapping between $H^1(\Omega)$ and $L^2(\Gamma)$. We therefore conjecture
that the trace is well-behaved between $H^1(\Omega)$ and $H^{s}(\Gamma)$, $s<0$. 

The fractional Sobolev space $H^{s}(\Gamma)$  shall be defined by interpolation
\cite{LionsMagenes, arioli_siam}. For the sake of completeness, we review here
the presentation from \cite{2d-1d}. Let $u, v\in X=H^1(\Gamma)$.
For $u$ fixed $v \mapsto (u, v)_{\Gamma}$ is in $\dual{X}$ and by the Riesz-Fr\'echet 
theorem there is a unique $w\in X$ such that $(w, v)_X = (u, v)_{\Gamma}$ for
any $v\in X$. The operator $S:u\rightarrow w$ is injective and compact and thus
the eigenvalue problem $S\phi_i=\lambda_i \phi_i$ (no summation implied) is well-defined. 
In addition, $S$ is self-adjoint and positive-definite such that the eigenvalues form 
a nonincreasing sequence $0 < \lambda_{k+1} \leq \lambda_{k}$ and $\lambda_{k}\rightarrow 0$. 
By definition, the eigenvectors satisfy
\[
  (\phi_i, v)_X=\lambda_i^{-1}(\phi_i, v)_{\Gamma}\quad v\in X,
\]
or equivalently
\begin{equation}\label{eq:snorm_construction}
  A\phi_i = \lambda_i^{-1}M\phi_i\mbox{ with }
  \brack{Au, v}_{\dual{X}, X}=(u, v)_X\mbox{ and }
  \brack{Mu, v}_{\dual{X}, X}=(u, v)_{\Gamma}.
\end{equation}
Further, the set of eigenvectors $\set{\phi_k}_{k=1}^{\infty}$ forms a basis of 
$X$, which is orthogonal in the inner product of $X$ and orthonormal in the
$L^2(\Gamma)$ inner product. Finally, for $s\in [-1,1]$ we define the $s$-norm
of $u = c_k \phi_k \in \spn\set{\phi_k}_{k=1}^\infty$ as
\begin{equation}\label{eq:snorm_contin}
  \norm{u}_{H^s(\Gamma)} 
  = 
  \sqrt{c^2_k \lambda^{-s}_k}.
\end{equation}
The space $H^s(\Gamma)$ is finally defined as the closure of the $\spn\set{\phi_k}_{k=1}^\infty$
in the $s$-norm, while $H^s_{0}(\Gamma)$ is then defined analogously to
$H^s(\Gamma)$ with $X=H^1_0(\Gamma)$ in the construction. 

Following the approach in \cite{2d-1d}, a weak formulation of the homogeneous 
Dirichlet problem for \eqref{eq:pde1}--\eqref{eq:coupling} with $\Omega\in\reals^3$, 
$\Gamma\subset\Omega$ of codimension \emph{one} or \emph{two}, using the method of Lagrange multipliers, 
reads: Find  $\left(u, v, p\right)\in H_0^1(\Omega) \times H_0^1(\Gamma) \times Q$ 
such that 
\begin{equation}
  \label{eq:weak_form}
\begin{aligned}
  \inner{\nabla u}{\nabla \phi}_{\Omega} + \inner{u}{\phi}_{\Omega}+\inner{p}{T \phi}_\Gamma &=
  \inner{f}{\phi}_{\Omega} 
  \quad &\forall\phi &\in H^1_0(\Omega),
  \\
  \inner{\nabla v}{\nabla \psi}_{\Gamma} + \inner{v}{\psi}_{\Gamma} - \inner{p}{\psi}_\Gamma &=
  \inner{g}{\psi}_{\Gamma}
  \quad &\forall\psi &\in H^1_0(\Gamma),
  \\
  \inner{\chi}{T u - v}_\Gamma &= \inner{h}{\chi}_\Gamma
  \quad &\forall\chi &\in Q.
\end{aligned}
\end{equation}
For $\Gamma$ of codimension one, the problem is well-posed with $Q=H^{\nhalf}_{0}$
owing to the fact that $T:H^1_0(\Omega) \rightarrow H^{\half}_{0}(\Gamma)$ 
is an isomorphism. Similarly, for $\Gamma$ of codimension two, the well-posedness hinges
on whether $T:H^1_0(\Omega) \rightarrow \dual{Q}$ is an isomorphism for some space $Q$.
Hovewer, to the best of the authors knowledge this result is not known. In this paper,
we therefore conjecture that the space $Q$ is closely related to $H^{s}_{0}(\Gamma)$ for
some suitable $s<0$.

Assuming the the conjecture holds, the operator $\mathcal{A}$ defined by
\eqref{eq:weak_form}
\begin{equation}
\mathcal{A} \left[ \begin{array}{c} u\\ v\\ p\\ \end{array} \right] = 
\left[ \begin{array}{ccc} 
       I_{\Omega}-\Delta_{\Omega} & 0 & T' \\
       0 & I_{\Gamma}-\Delta_{\Gamma} & -I_{\Gamma}  \\
       T & -I_{\Gamma} & 0  
       \end{array} \right]
        \left[ \begin{array}{c} u\\ v\\ p\\ \end{array} \right] = 
        \left[ \begin{array}{c} f\\ g\\ h\\ \end{array} \right]  .  
\end{equation}
is an isomorphism mapping $H^1_0(\Omega) \times H^1_0(\Gamma) \times Q$
to its dual space and a proper preconditioner can be formed as
\begin{equation}
\mathcal{B} = 
\left[ \begin{array}{ccc} 
       (I_{\Omega}-\Delta_{\Omega})^{-1} & 0 & 0 \\
       0 & (I_{\Gamma}-\Delta_{\Gamma})^{-1} & 0  \\
       0 & 0 & R_Q  
       \end{array} \right], 
\end{equation}
where $R_Q$ is the Riesz mapping between the dual of $Q$ and $Q$, cf. \cite{kent_ragnar}.
As the Riesz mapping is not easily obtained from the analysis of the continuous
problem, we shall in the following resort to investigating the mapping properties
of the trace operator of codimension two by a series of numerical experiments
with different spaces for $Q$.  

Let now $V_h\subset H^1(\Omega)$. Considering \eqref{eq:pde} on the finite dimensional 
spaces, we obtain a variational problem: Find $\left(u_h, v_h, p_h\right)\in V_h \times W_h \times Q_h$ 
such that 
\begin{equation}
  \label{eq:weak_form_h}
\begin{aligned}
  \inner{\nabla u_h}{\nabla \phi_h}_{\Omega} + \inner{u_h}{\phi_h}_{\Omega} + 
  \inner{p_h}{T_h \phi_h}_\Gamma &=
  \inner{f}{\phi_h}_{\Omega} 
  \quad\quad&\phi_h &\in V_h,
  \\
  \inner{\nabla v_h}{\nabla \psi_h}_{\Gamma} + \inner{v_h}{\psi_h}_{\Gamma} -
  \inner{p_h}{\psi_h}_\Gamma &=
  \inner{g}{\psi_h}_{\Gamma}
  \quad\quad&\psi_h &\in W_h,
  \\
  \brack{\chi_h,  T_h u_h - v_h}_\Gamma &= \inner{h}{\chi_h}_\Gamma
  \quad\quad&\chi_h &\in Q_h.
\end{aligned}
\end{equation}
Here the discrete trace operator $T_h$ is well-defined as the functions in $V_h$
are continuous. In the absence of existence
result for the continuous problem the discrete preconditioner
cannot be constructed within the framework of operator precoditioning,
i.e. as a discretization of a suitable Riesz mapping. We therefore adapt a
different framework, namely the matrix Schur complement preconditining
\cite{benzi2005numerical, murphy}. That is, we attempt to construct the
preconditioner for \eqref{eq:weak_form_h} by reasoning directly about the
properties of the discrete systems.

From a linear algebra point of view, the problem \eqref{eq:weak_form_h} is a saddle-point 
system
\[
  \begin{bmatrix}
    \la{A} & \transp{\la{B}} \\ 
    \la{B} & 0
  \end{bmatrix}
  \begin{bmatrix}
\la{x} \\ \la{y}  
  \end{bmatrix}
=
\begin{bmatrix}
\la{b} \\ \la{c}  
\end{bmatrix},
\]
with $\la{A}$ a symmetric positive definite matrix. In case $\la{B}$ has a
full row rank, the discrete problem is uniquely solvable and block diagonal
preconditioner can be constructed as an approximate inverse of the matrix $\text{diag}(\la{K}, \la{L})$, 
where $\la{K}$ should be spectrally equivalent with $\la{A}$ and $\la{L}$ should be 
spectrally equivalent with the Schur complement $\la{B}\inv{\la{A}}\transp{\la{B}}$, 
see, e.g., \cite{rusten, silvester1994fast}. Considering
\eqref{eq:weak_form_h}, the key question is thus whether it is possible 
(in an efficient and systematic manner) to construct an operator that is spectrally 
equivalent with the Schur complement. Motivated by the 2$d$-1$d$ problem and our
conjectured mapping properties of the trace the operator shall be based
on the norm of the $H^s(\Gamma)$ space \eqref{eq:snorm_contin}.

Following \cite{2d-1d}, the discrete approximation of the $s$-norm  
shall be constructed by mirroring the continuous eigenvalue problem
\eqref{eq:snorm_construction}. More specifically, let $X_h\subset X$ and matrices 
$\la{A}$, $\la{M}$ be the representations of $A_h$, $M_h$; the Galerkin
approximations of operators $A$, $M$ from \eqref{eq:snorm_construction}. Then
there exists an invertible matrix $\la{U}$ and diagonal, positive-definite matrix
$\la{\Lambda}$ satisfying $\la{A U}=\la{M U \Lambda}$. Moreover, the product
$\transp{\la{U}}\la{M}\la{U}$ is an identity such that the columns of
$\la{U}$ form an $\la{A}$ orthogonal and $\la{M}$ orthonormal basis of $\reals^n$. 
In order to define the discrete norm, we let $\la{H}_s$ be a symmetric,
positive-definite matrix
\begin{equation}\label{eq:smat}
  \la{H}_s = \transp{\left(\la{MU}\right)}\la{\Lambda}^s\left(\la{MU}\right).
\end{equation}
The matrices $\la{H}_{s, 0}$ are defined analogously to \eqref{eq:smat}, using the
eigenvalue problem for the Laplace operator with homogeneous Dirichlet boundary
conditions. For $u_h\in X_h$ represented in the basis of the space by a 
coefficient vector $\la{u}$, let $\la{c}$ be the representation of $\la{u}$ in the 
basis of eigenvectors, that is, $\la{u}=\la{U}\la{c}$. We then set
\begin{equation}\label{eq:snorm}
  \norm{u_h}_{H^s(\Gamma)} = 
  \sqrt{\transp{\la{u}}\la{H}_s\la{u}} =
  \sqrt{\transp{\la{c}}\la{\Lambda}^s\la{c}}.
\end{equation}



\section{Norms for the discrete 3$d$-1$d$ trace}\label{sec:trace}

The matrices $\la{H}_s$ shall be employed to construct a preconditioner for the
Schur complement of the system \eqref{eq:weak_form_h}. Considering \eqref{eq:weak_form},
the matrix is a sum of two parts which correspond respectively to operators
$T(-\Delta_{\Omega}+I_{\Omega})^{-1}\dual{T}$ and $I_{\Gamma}(-\Delta_{\Gamma}+I_{\Gamma})^{-1}\dual{I_{\Gamma}}$.
As the matrix stemming from the latter term is by definition spectrally equivalent
with $\la{H}_{-1}$ we shall next focus only on the former trace term. We note that
if our conjucture on the mapping properties of the trace operator holds, that
is $T:H^1(\Omega)\rightarrow H^s(\Gamma)$ is bounded and sujective for some $s<0$,
then  $T(-\Delta_{\Omega}+I_{\Omega})^{-1}\dual{T}: \dual{H^s(\Gamma)} \rightarrow H^s(\Gamma)$
is an isomorphism and the preconditioner could be realized by the fractional
norm matrix.

To investiagate the conjectured spectral equivalence of the trace term,
let $V, Q$ be the spaces of continuous functions over $\Omega$ and $\Gamma$
respectively and consider the problem of minimizing $v\mapsto (\nabla v, \nabla v)_{\Omega}-2(f, v)_{\Omega}$, 
$v\in V$, subject to $v=0$ on the boundary and the constraint $T v=g$ on $\Gamma$.
The minimization problem leads to the variational problem for $u\in V$, $p\in Q$
satisfying
\begin{equation}\label{eq:trace_problem}
  \begin{aligned}
    (\nabla u, \nabla v)_{\Omega} + (p, T v)_{\Gamma} &= (f, v)_{\Omega} &\quad\forall v\in V,\\
    (q, T u)_{\Gamma} &= (q, g)_{\Gamma} &\quad\forall q\in Q 
  \end{aligned}.
\end{equation}
The Schur complement of \eqref{eq:trace_problem} is thus closely related
to the critical trace term in the Schur complement of \eqref{eq:weak_form}.

Using finite dimensional subspaces of $V_h\subset V$ and $Q_h\subset Q$ \footnote{
  We use the same subscript to signify that the function spaces cannot be
  arbitrary and instead must satisfy inf-sup compatibility condition.
}
the problem \eqref{eq:trace_problem} is equivalent to the linear system
\begin{equation}\label{eq:sine_3d_system}
  \begin{bmatrix}
    \la{A} & \transp{\la{T}}\\
    \la{T} & 0
  \end{bmatrix}
  \begin{bmatrix}\la{u}\\
    \la{p}\end{bmatrix}
  =
  \begin{bmatrix}\la{f}\\
    \la{g}\end{bmatrix}.
\end{equation}
and we wish to find computational evidence for the following claim.

\begin{conjecture}\label{thm:equiv}
There exist $s<0$ and constants $0<\lambda_{*}\leq\lambda^{*}$ such that
for any $h>0$
\begin{equation}\label{eq:equiv_cond}
  \lambda_{*} \leq \frac{\transp{\la{x}}\left(\la{T}\inv{\la{A}}\transp{\la{T}}\right)\la{x}}
  {\transp{\la{x}}\la{H}_{s, 0}\la{x}} \leq \lambda^{*}.
\end{equation}
\end{conjecture}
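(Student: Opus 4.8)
The plan is to recast the claimed spectral equivalence \eqref{eq:equiv_cond} as a statement about the generalized eigenvalue problem $\la{T}\inv{\la{A}}\transp{\la{T}}\,\la{x} = \lambda\,\la{H}_{s, 0}\,\la{x}$ and to produce the constants $\lambda_{*}, \lambda^{*}$ by monitoring its extreme eigenvalues under mesh refinement. First I would fix a handful of reference configurations --- e.g. $\Omega$ the unit cube (or ball) with $\Gamma$ a straight segment or a smooth curve through the interior --- and, for each, assemble the three ingredients: the $H^1(\Omega)$ stiffness-plus-mass matrix $\la{A}$, the rectangular discrete trace $\la{T}$ obtained by evaluating the $V_h$ basis on the $Q_h$ mesh, and the fractional matrix $\la{H}_{s, 0}$ built from the dense generalized eigendecomposition $\la{A}_{\Gamma}\la{U} = \la{M}_{\Gamma}\la{U}\la{\Lambda}$ of the $1d$ Laplacian as in \eqref{eq:smat}. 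Since $\la{A}$ need not be inverted explicitly, the action $\la{x}\mapsto\la{T}\inv{\la{A}}\transp{\la{T}}\la{x}$ is realized by a single sparse solve, so the extreme eigenvalues can be extracted with a Krylov eigensolver even on fine $3d$ meshes.

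Second, for a grid of candidate exponents $s \in (-1, 0)$ I would tabulate $\lambda_{\min}$ and $\lambda_{\max}$, hence the condition number $\lambda^{*}/\lambda_{*}$, over a sequence of uniformly refined meshes $h = h_0, h_0/2, h_0/4, \dots$. The conjecture predicts a narrow band of $s$ for which both bounds stabilise as $h\to 0$: for $s$ too close to $0$ the upper bound should blow up, reflecting the loss of the $\epsilon=0$ trace, while for $s$ too negative the lower bound should degrade. Reading off the plateau then selects $s$ and furnishes numerical values for $\lambda_{*}, \lambda^{*}$. Third, to argue that the identified $s$ is a property of the continuous trace and not an artefact of one discretization, I would repeat the experiment across the discretization families advertised in the introduction --- conforming and non-conforming $V_h$, matching and non-matching $\Omega$/$\Gamma$ meshes, and the spectral Galerkin method in the Laplace eigenbasis --- and verify that the stable range for $s$ is essentially unchanged.

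On the analytical side one can try to bracket the behaviour. The upper bound amounts to $\norm{T_h u_h}_{H^s(\Gamma)} \le C\norm{u_h}_{H^1(\Omega)}$ with $C$ independent of $h$, for which the natural route couples the known bounded trace $H^{1+\epsilon}(\Omega)\to H^{\epsilon}(\Gamma)$ with an inverse inequality $\norm{u_h}_{H^{1+\epsilon}(\Omega)} \lesssim h^{-\epsilon}\norm{u_h}_{H^1(\Omega)}$; the lower bound is the discrete inf-sup statement that every $g_h\in Q_h$ admits an extension $u_h\in V_h$ with $T_h u_h = g_h$ and $\norm{u_h}_{H^1(\Omega)} \lesssim \norm{g_h}_{H^s(\Gamma)}$. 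The main obstacle --- and the reason this is stated as a conjecture rather than a theorem --- is exactly the limit $\epsilon\downarrow 0$: the inverse-inequality argument yields a uniform bound only if $\epsilon$ is driven to zero with $h$, which leaves a logarithmic factor rather than a clean constant, and no trace theorem is available at $\epsilon=0$ to close the gap. Hence the $h$-independence in \eqref{eq:equiv_cond} can only be inferred empirically from the observed plateaus, and extra care is needed for $\lambda_{*}$, since it is governed by the discrete inf-sup constant of \eqref{eq:sine_3d_system}, whose robustness in codimension two --- especially on non-matching meshes --- is itself delicate and is revisited in \S\ref{sec:unmatch}.
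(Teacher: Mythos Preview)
Your experimental plan is essentially the one the paper carries out --- compute the extreme eigenvalues of the generalized problem $\la{T}\inv{\la{A}}\transp{\la{T}}\,\la{x}=\lambda\,\la{H}_{s,0}\,\la{x}$ across a grid of exponents $s$ and a sequence of refinements, using both the spectral Galerkin basis and FEM --- so methodologically you are aligned with the paper. The genuine gap is in the outcome you anticipate: you predict ``a narrow band of $s$ for which both bounds stabilise as $h\to 0$'' and propose to read off $\lambda_{*},\lambda^{*}$ from that plateau. The paper's experiments show that this plateau does \emph{not} exist. For every $s$ tested, including the favoured range $s\in(-0.2,-0.1)$, both $\lambda_{\min}(s,h)$ and $\lambda_{\max}(s,h)$ grow without bound under refinement (see Tables~\ref{tab:sine_3d} and~\ref{tab:trace_fem_eigs}); only their \emph{ratio} stays bounded. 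In other words, the numerics support Conjecture~\ref{thm:cond} (bounded condition number) but explicitly \emph{disprove} Conjecture~\ref{thm:equiv}. The paper states this directly after both the spectral and FEM experiments: ``contrary to conjecture~\ref{thm:equiv}, the matrices $\la{H}_{s,0}$ are not spectrally equivalent with the Schur complement.''

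So your programme would run, but it would not deliver the constants $\lambda_{*},\lambda^{*}$ you are after; instead it would reveal that only a mesh-dependent rescaling $\tau(s,h)\la{H}_{s,0}$ (e.g.\ with $\tau=\lambda_{\min}(s,h)$) achieves spectral equivalence, which is precisely the weaker conclusion the paper draws. Your analytical sketch already hints at why the strict bound fails: the inverse-inequality route leaves a logarithmic residue, and indeed for $s=0$ the paper computes the Schur complement in closed form (Remark~\ref{rem:s_0}) and exhibits exactly that $\log n$ growth in $\lambda_{\max}$. The correct takeaway is therefore not a proof of Conjecture~\ref{thm:equiv} but its numerical refutation, together with the positive observation that the condition number is nevertheless controlled --- which is what the paper then exploits to build preconditioners.
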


In addition to spectral equivalence condition \eqref{eq:equiv_cond} we shall
also consider a weaker requirement, where we wish to find $s$ for which the
condition number of the preconditioned Schur complement is bounded in $h, H$ for some $s<0$.
More precisely, let $0<\lambda_{\text{min}}(s, h)\leq\lambda_{\text{max}}(s, h)$ be the smallest and
largest eigenvalues of the generalized eigenvalue problem
\begin{equation}\label{eq:cond_eigs}
  \left(\la{T}\inv{\la{A}}\transp{\la{T}}\right) \la{p} = \lambda \la{H}_s \la{p}.
\end{equation}

\begin{conjecture}\label{thm:cond}
There exist $s<0$ such that the condition 
number
\begin{equation}\label{eq:cond_cond}
  \kappa(s, h) = \frac{\lambda_{\text{max}}(s, h)}{\lambda_{\text{min}}(s, h)} \leq C \quad \forall h > 0,
\end{equation}
for some constant $C$.
\end{conjecture}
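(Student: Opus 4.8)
We do not prove Conjecture~\ref{thm:cond} (nor the stronger Conjecture~\ref{thm:equiv}); gathering the numerical evidence for it is precisely the object of \S\ref{sec:trace}. It is nevertheless worth recording the route a proof would take and the point at which it currently stalls. Note first that Conjecture~\ref{thm:cond} is a genuine relaxation of Conjecture~\ref{thm:equiv}: only the ratio $\lambda_{\text{max}}(s,h)/\lambda_{\text{min}}(s,h)$ need stay bounded, so an overall --- possibly $h$-dependent --- rescaling of $\la{H}_s$ is allowed. The plan is therefore to exhibit a fixed $s<0$ and a scalar $\rho(h)>0$ with
\[
  \transp{\la{p}}\bigl(\la{T}\inv{\la{A}}\transp{\la{T}}\bigr)\la{p}\ \eqsim\ \rho(h)\,\transp{\la{p}}\la{H}_s\la{p}\qquad\forall\,\la{p},
\]
the hidden constants independent of $h$ (and, in \S\ref{sec:unmatch}, of the $\Gamma$-mesh size $H$).

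\emph{Reduction to a discrete trace estimate.} Let $q_h\in Q_h$ have coefficient vector $\la{p}$. The standard Schur-complement identity gives
\[
  \transp{\la{p}}\bigl(\la{T}\inv{\la{A}}\transp{\la{T}}\bigr)\la{p}\ =\ \sup_{v_h\in V_h}\frac{\abs{\inner{q_h}{T_h v_h}_\Gamma}^2}{\semi{v_h}^2_{H^1_0(\Omega)}}\ =:\ \norm{T_h^{*}q_h}^2_{\dual{V_h}},
\]
the squared discrete $H^{-1}(\Omega)$ norm of the line functional $v_h\mapsto\inner{q_h}{T_h v_h}_\Gamma$, while $\transp{\la{p}}\la{H}_s\la{p}=\norm{q_h}^2_{H^s(\Gamma)}$ by \eqref{eq:snorm}. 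Hence the claim is equivalent to the pair of mesh-uniform bounds $\norm{T_h^{*}q_h}_{\dual{V_h}}\lesssim\sqrt{\rho(h)}\,\norm{q_h}_{H^s(\Gamma)}$ and $\norm{T_h^{*}q_h}_{\dual{V_h}}\gtrsim\sqrt{\rho(h)}\,\norm{q_h}_{H^s(\Gamma)}$ for all $q_h\in Q_h$; by duality these say, respectively, that the discrete codimension-two trace maps $V_h$ into $H^{-s}(\Gamma)$ with norm $\lesssim\sqrt{\rho(h)}$ (continuity), and that it admits a matching discrete inf--sup constant. Everything thus reduces to quantifying the mapping properties of $T_h\colon V_h\to Q_h$ in the $H^s(\Gamma)$ scale.

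\emph{Attacking the two bounds.} For continuity I would localize to the tube $\mathcal{N}_h(\Gamma)$ of mesh cells meeting $\Gamma$, combining on a single cell a polynomial trace/scaling inequality --- a curve being of codimension two in $3d$, this loses a factor comparable to the point trace in $2d$ --- with the embedding $H^1(\Omega)\hookrightarrow L^6(\Omega)$; summation over $\mathcal{N}_h(\Gamma)$ recovers the known sharp estimate $\norm{T_h v_h}_{L^2(\Gamma)}\lesssim\sqrt{\abs{\log h}}\,\norm{v_h}_{H^1(\Omega)}$, which fixes $\rho(h)\sim\abs{\log h}$ as the natural normalization. The refinement then needed is quantitative and mode-resolved: one must show that the logarithmic loss is damped to an $h$-independent constant once $q_h$ is measured in the $H^s(\Gamma)$ scale \eqref{eq:smat}--\eqref{eq:snorm}, equivalently that $\norm{T_h v_h}_{H^{-s}(\Gamma)}\lesssim\sqrt{\abs{\log h}}\,\norm{v_h}_{H^1(\Omega)}$ with $-s>0$. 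For the inf--sup bound the natural device is a Fortin operator $\Pi_h\colon H^1_0(\Omega)\to V_h$, uniformly $H^1$-stable and preserving the line averages tested by $Q_h$ --- a Scott--Zhang or Cl\'ement quasi-interpolant suitably modified --- which would reduce the discrete inf--sup to the continuous statement that $T\colon H^1_0(\Omega)\to\dual{Q}$ is an isomorphism for $Q$ in an $H^{s}_0(\Gamma)$ scale, the preimage of $q_h$ then being a quasi-interpolant of the solution of $-\Delta w=q_h\delta_\Gamma$.

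\emph{The obstruction.} This is exactly where the argument is blocked, and the obstruction appears to be genuine on two counts. First, the continuous statement underlying the inf--sup step is the codimension-two trace question recalled in \S\ref{sec:trace_theory}: $T\colon H^{1+\epsilon}(\Omega)\to H^{\epsilon}(\Gamma)$ is classical for $\epsilon>0$ but fails at $\epsilon=0$, and whether the limiting mapping property holds into a \emph{fixed} $H^s(\Gamma)$, $s<0$, is not known. Second, and more pointedly, the failure at $\epsilon=0$ is only logarithmic --- the profile $\abs{\log r}$ in the distance $r$ to $\Gamma$ makes $\norm{T_h v_h}_{L^2(\Gamma)}/\norm{v_h}_{H^1(\Omega)}$ grow like $\sqrt{\abs{\log h}}$, a growth concentrated on the \emph{lowest} $\Gamma$-modes, whose $H^s(\Gamma)$ norm is, up to a constant, independent of $s$; on the highest $\Gamma$-modes, by contrast, the trace energy is $\mathcal{O}(1)$ while the $\la{\Lambda}^s$ weight with $s<0$ is $\mathcal{O}(h^{2\abs{s}})$, vanishing as $h\to0$. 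A single power weight therefore cannot reproduce the true mode distribution of the Schur complement across all modes at once, so one should not expect the clean equivalence of Conjecture~\ref{thm:equiv} with a fixed pair $(\lambda_*,\lambda^*)$; the weaker Conjecture~\ref{thm:cond}, which only asks that the drift of $\kappa(s,h)$ saturate, is the statement we probe numerically rather than prove. Whether such saturation genuinely occurs for some $s<0$, and the evidence of \S\ref{sec:trace} locating the admissible exponent consistently in $s\in(-0.2,-0.1)$ can be turned into a theorem --- presumably only after the continuous trace question is settled and a compatible family of Fortin operators is constructed --- we leave open.
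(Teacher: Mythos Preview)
Your reading is correct: the paper offers no proof of Conjecture~\ref{thm:cond}. It is stated as a conjecture and then probed exclusively by numerical experiment in \S\ref{sec:trace}, first with the spectral basis (Figure~\ref{fig:sine_3d}, Table~\ref{tab:sine_3d}) and then with FEM (Figure~\ref{fig:trace_fem}, Tables~\ref{tab:trace_fem}--\ref{tab:trace_fem_eigs}). The paper's own conclusion is that the numerics support Conjecture~\ref{thm:cond} for $s\in(-0.2,-0.1)$ while contradicting the stronger Conjecture~\ref{thm:equiv} (both extremal eigenvalues drift; only their ratio stays bounded). The sole analytical fragment is Remark~\ref{rem:s_0}, which reads off the logarithmic growth at $s=0$ directly from the closed-form Schur complement \eqref{eq:schur_3d} on $\Gamma_1$. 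So your opening sentence --- that the conjecture is not proved and that \S\ref{sec:trace} is devoted to gathering evidence for it --- is exactly the paper's position.

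What you add goes beyond the paper rather than diverging from it. The reduction of the Schur-complement quadratic form to a dual discrete-trace norm, the Fortin-operator strategy for the inf--sup half, and especially the mode-wise heuristic for why spectral equivalence \emph{must} fail (the $\sqrt{\abs{\log h}}$ loss sits on low $\Gamma$-modes where the $\la{\Lambda}^s$ weight is inert, while on high modes the weight decays like $h^{2\abs{s}}$) are absent from the paper but consistent with its data: Tables~\ref{tab:sine_3d} and~\ref{tab:trace_fem_eigs} show precisely that for $s=-0.14$ both $\lambda_{\min}$ and $\lambda_{\max}$ grow while $\kappa$ is flat, and that for $s=0$ only $\lambda_{\max}$ grows, logarithmically. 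Your identification of the obstruction --- the unresolved continuous codimension-two trace question of \S\ref{sec:trace_theory}, on which any Fortin argument would ultimately rest --- is also the paper's stated reason for resorting to computation. In short, neither you nor the paper proves the conjecture; you correctly flag its status and supply an analytical frame the paper does not attempt.
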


We note that the condition \eqref{eq:cond_cond} is motivated by the
fact that convergence of the preconditioned conjugate gradient method is estimated 
in terms of the condition number, see, e.g., \cite{trefethen}. For suitable $s$ the 
linear system with the Schur complement could thus be solved efficiently.
We also note that the condition is weaker than spectral equivalence \eqref{eq:equiv_cond}. 

To investigate conjectures \ref{thm:equiv}, \ref{thm:cond} we let 
$\Omega=\left[0, 1\right]^3$ and choose $\Gamma$ as simple straight lines;
$\Gamma_1=\set{(t, \tfrac{1}{2}, \tfrac{1}{2}); t\in\left[0, 1\right]}$ 
and $\Gamma_2=\set{(t, t, t); t\in\left[0, 1\right]}$. For discretization
of \eqref{eq:trace_problem} the discrete subspaces shall be first constructed
using the basis of eigenfunctions of the Laplacian.




\subsection{Trace operator with spectral discretization}\label{sec:trace_spectral}
Let $\set{\phi_k}_{k\geq 1}$ be the set of eigenvectors of the Laplace operator on unit interval with
homogeneous Dirichlet boundary conditions and set $Q_m=\spn\set{\phi_k}_{k=1}^{m}$ while
the $n^3$ dimensional space $V_n$ of functions on $\Omega$ shall be defined as a
tensor product.

Considering \eqref{eq:trace_problem} with spaces $V_n$, $Q_m$ the matrix $\la{A}$ in
\eqref{eq:sine_3d_system} diagonal. The trace matrix $\la{T}\in\reals^{{m\times n^3}}$
for curve $\Gamma_1$ is sparse with entries
  \[
\la{T}_{j, (i, k, l)} = 
  \begin{cases}
  0 & k\mbox{ or }l\mbox{ even }\\
  (-1)^{k+1}(-1)^{l+1}2\delta_{i j} & \mbox{otherwise}
  \end{cases}.
  \]
Note that for $m>n$ the matrix does not have a full row rank and the system is singular.
We therefore set $m=n$. For $\Gamma_2$ the trace matrix is sparse with a more
involved sparsity pattern and at most four nonzero entries per row
  \[
\la{T}_{j, (i, k, l)}
  =
4\sqrt{3}\int_0^1\sin{j\pi t}\sin{i\pi t}\sin{k\pi t}\sin{l\pi t}\,\mathrm{d}t.
  \]
%
  
  Having defined the terms in \eqref{eq:sine_3d_system} we consider the
  generalized eigenvalue problem \eqref{eq:cond_eigs} for different values of
  $s$ and the discretization parameter $n$. Observe that in case of $\Gamma_1$
  the Schur complement can be obtained in a closed form. Indeed, the matrix is
  diagonal $S_j\delta_{ij}$ (no summation implied) with entries
\begin{equation}\label{eq:schur_3d}
S_j = \frac{4}{\pi^2}\sum_{l, m\mbox{{ odd}}}^n\frac{1}{j^2+l^2+m^2}.
\end{equation}
For $\Gamma_2$ the matrix is dense and shall be computed from the definition
$\la{T}\inv{\la{A}}\transp{\la{T}}$. As such a smaller $n$ is explored in this configuration.

\begin{figure}
  \begin{center}
    \includegraphics[width=0.49\textwidth,
    height=0.25\textheight]{./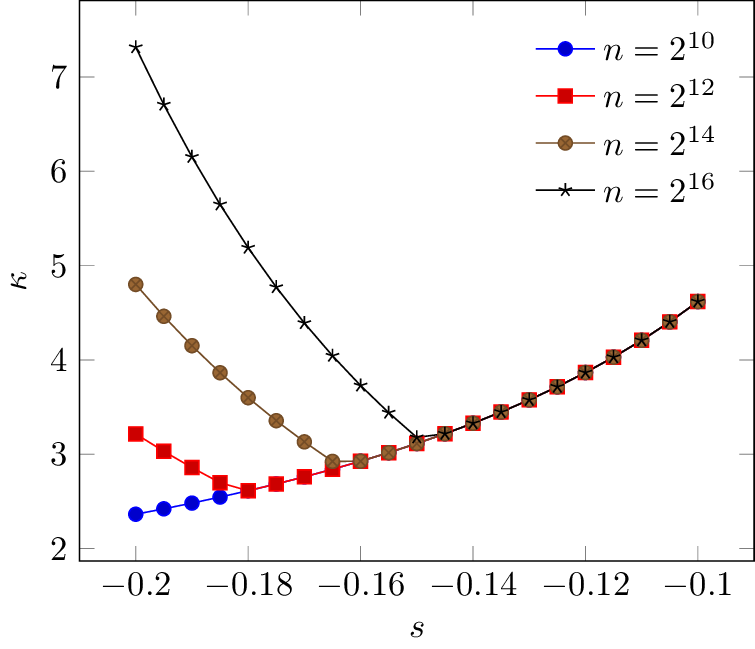}
    \includegraphics[width=0.49\textwidth,
    height=0.25\textheight]{./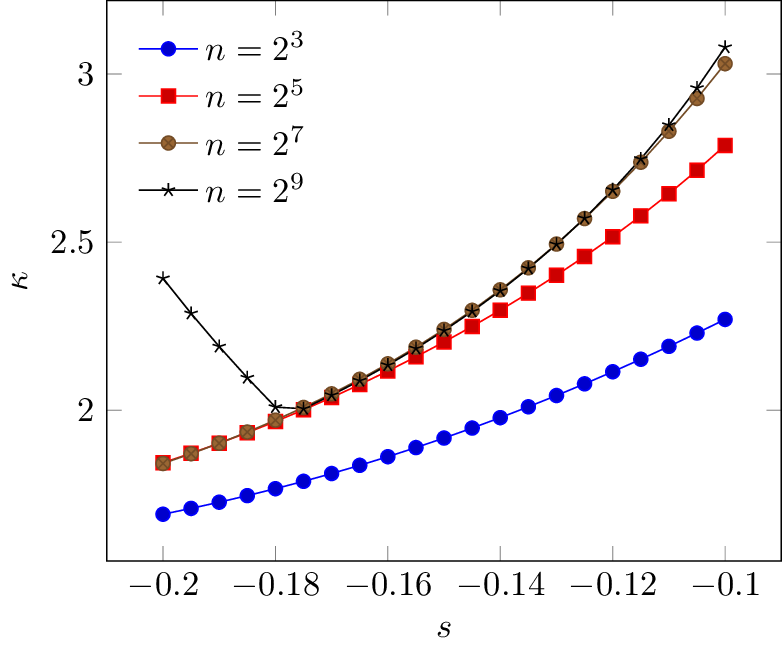}
    \caption{
    Spectral condition numbers \eqref{eq:cond_cond} computed from the generalized eigenvalue problem
    for Schur complement of \eqref{eq:sine_3d_system} and matrices $\la{H}_{s,
    0}$, see \eqref{eq:snorm}. (Left) The constraint is considered on 
    $\Gamma_1=\set{(t, \tfrac{1}{2}, \tfrac{1}{2}); t\in\left[0, 1\right]}$.
    (Right) $\Gamma_2=\set{(t, t, t); t\in\left[0, 1\right]}$ is considered.
    }
    \label{fig:sine_3d}
  \end{center}
\end{figure}
The results of the numerical experiments with $s\in\left[-0.2, -0.1\right]$ are 
summarized in Figure \ref{fig:sine_3d}. We observe that values
$s\in\left[-0.145, -0.1\right]$ yield bounded condition numbers for $\Gamma_1$.
The condition numbers are not quite converged for the other configuration,
however, it is possible to identify unstable exponents $s<-0.18$. Moreover, the values
close to $s=-0.14$ appear to be stable also in this configuration. This fact is 
easier to appreciate in Table \ref{tab:sine_3d}, which shows $\lambda_{\min}$, 
$\lambda_{\max}$ and $\kappa$ as functions of the discretization parameter for 
$s=-0.14$. For $\Gamma_1$ the condition number is evidently constant, while for 
$\Gamma_2$ the number appears to be bounded. The observation are therefore
supportive of conjecture \ref{thm:cond}. 

For neither of the configurations and any of the considered vales $s$ the
smallest and largest eigenvalues are bounded and thus, contrary to conjecture \ref{thm:equiv},
the matrices $\la{H}_{s, 0}$ are not spectrally equivalent with the Schur complement.
However, taking e.g. $s=-0.14$, either of $\lambda_{\min}(n)$, $\lambda_{\max}(n)$ defines a
mesh-depenedent scale $\tau(n)\la{H}_{-0.14, 0}$ that yields spectral
equivalence. Such scale, however, is not easily computable in general as it
involves the inverse of the 3$d$ problem.

\begin{table}
  \begin{center}
  \caption{
Smallest and largest eigenvalues $\lambda_{\min}$, $\lambda_{\max}$ and the
spectral condition numbers $\kappa$ of \eqref{eq:cond_eigs}. (Top) The
preconditioner is $\la{H}_{-0.14, 0}$.
While the eigenvalues are unbounded the condition number is bounded in $n$.
(Bottom) Matrix $\la{H}_{0, 0}$ (identity matrix) is used as the preconditioner.
In agreement with the analysis in Remark \ref{rem:s_0}, constant $\lambda_{\min}$ 
and $\lambda_{\max}$ with a logarithmic growth are observed.
}
\footnotesize{
\begin{tabular}{cccc||cccc}
  \hline
\multicolumn{4}{c||}{$\Gamma_1=\set{(t, \tfrac{1}{2}, \tfrac{1}{2}); t\in\left[0, 1\right]}$} 
  & 
\multicolumn{4}{c}{$\Gamma_2=\set{(t, t, t); t\in\left[0, 1\right]}$}\\
\hline
$\log_2n$ & $\lambda_{\min}$ & $\lambda_{\max}$ & $\kappa$ & $\log_2n$ & $\lambda_{\min}$ & $\lambda_{\max}$ & $\kappa$\\
  \hline
  ${10}$ & 0.6218 & 2.0696 & 3.3285 & ${6}$ & 0.8476 & 1.9916 & 2.3496\\
  ${12}$ & 0.9167 & 3.0511 & 3.3285 & ${7}$ & 1.0298 & 2.4283 & 2.3581\\
  ${14}$ & 1.3514 & 4.4982 & 3.3285 & ${8}$ & 1.2513 & 2.9491 & 2.3569\\
  ${16}$ & 1.9923 & 6.6315 & 3.3285 & ${9}$ & 1.5201 & 3.5804 & 2.3553\\
  \hline
  \hline
${11}$ & 0.0648 & 1.2167 & 18.7767 & ${6}$ & 0.1939 & 1.2180 & 6.2807\\
${12}$ & 0.0648 & 1.3270 & 20.4792 & ${7}$ & 0.1938 & 1.4080 & 7.2655\\
${13}$ & 0.0648 & 1.4373 & 22.1818 & ${8}$ & 0.1938 & 1.5985 & 8.2487\\
${14}$ & 0.0648 & 1.5476 & 23.8843 & ${9}$ & 0.1938 & 1.7893 & 9.2312\\
  \hline
\end{tabular}
\label{tab:sine_3d}
}
\end{center}
\end{table}

In the numerical experiment the range of exponents was limited to $s\in\left[-0.2,
-0.1\right]$ and the upper bound yielded condition numbers independent of the
discretization parameter, cf. Figure \ref{fig:sine_3d}. The observation raises a
question about the suitablity of $s=0$, i.e. considering the multiplier space
$Q_m$ with the $L^2$ norm. It is shown in Remark \ref{rem:s_0} that the choice 
leads to a condition number with logarithmic growth.

\begin{remark}\label{rem:s_0} 
  We consider \eqref{eq:sine_3d_system} with $\Gamma_1$. Since $\la{H}_{0, 0}$ is (due
  to the employed discretization) an identity, the values $S_j$ in \eqref{eq:schur_3d} 
  are the eigenvalues of the preconditioned Schur complement, where $\la{H}_{0, 0}$ is
  the preconditioner. We have $S_j\geq S_n$ and observe that the lower bound sums 
  $\mathcal{O}(n^2)$ terms that are at most $n^{-2}$ in magnitude. Thus $S_n$ is 
  bounded from below by a constant. On the other hand the upper bound 
  $S_j \leq S_1$ grows as $\log{n}$. 

  
The estimates for $\Omega\subset\reals^3$ are confirmed by numerical experiments  
summarized in Table \ref{tab:sine_3d}. In particular, the constant lower bound 
and the upper bound growing proportianaly to $\log{n}$, are visible for both configurations.
\end{remark}

Experiments with the spectral discretization suggest that there exists a range
of negative exponents $s$, independent of $\Gamma$, such that the \textit{discrete} trace operator $T_h$ 
defined over $V_h$ can be controlled by the $s$-norm \eqref{eq:snorm} in the sense
of \eqref{eq:cond_cond} and conjecture \ref{thm:cond}. However, the space $V_h$ considered thus far consisted
of infinitely smooth functions. We proceed to show that the statement holds if the
discrete spaces are obtained by FEM. In particular, the space $V_h$ shall be
constructed using the $H^1$ conforming continuous linear Lagrange elements.

\subsection{Trace operator with FEM discretizaton}\label{sec:trace_fem}
Let $V_h\subset H^1(\Omega)$. Further, let $\set{\psi_k}_{k=1}^{m}$ and $\set{L_j}_{j=1}^{m}$ 
be, respectively, the basis and degrees of freedom/dual basis nodal with respect
to $\set{\psi_k}_{k=1}^{m}$
of the finite element space $Q_h$ over $\Gamma$. The trace mapping $T_h:V_h\rightarrow Q_h$
shall be defined by interpolation so that $p_h=T_h u_h$ is represented in the
basis by vector $\la{p}\in\reals^m$,
\begin{equation}\label{eq:Th}
  \la{p}_j = \brack{L_j, u_h|_{\Gamma}}.
\end{equation}
Equivalently we have $\la{p}=\la{T}\la{u}$ where $\la{u}\in\reals^{n}$ and the
matrix representing the trace operator has entries
\begin{equation}\label{eq:trace_entries}
  \la{T}_{i, j} = \brack{L_i, \phi_j|_{\Gamma}},
\end{equation}
where $\set{\phi_j}_{j=1}^{n}$ are the basis functions of $V_h$.

\begin{lemma}[Discrete trace operator by projection]\label{lm:trace} 
Let $u_h\in V_h$ be given and $\tilde{p}_h\in Q_h$ be the $L^2$ projection
\[
  (\tilde{p}_h, q)_{\Gamma} = (u_h|_{\Gamma}, q)_{\Gamma}, \quad q\in Q_h.
\]
Further let $p_h\in Q_h$ be defined via \eqref{eq:Th}.
Then $V_h|_{\Gamma}\subseteq Q_h$ is necessary and sufficient for $p_h=\tilde{p}_h$ .
\end{lemma}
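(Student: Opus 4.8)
The plan is to prove both implications by unwinding the definitions of $p_h$ (nodal interpolation of the trace) and $\tilde p_h$ ($L^2$-projection of the trace onto $Q_h$), exploiting that both $u_h|_\Gamma$ and all elements of $Q_h$ are functions on $\Gamma$ that may or may not lie in the same finite element space. The key observation is that $\tilde p_h$ is characterized by $\tilde p_h \in Q_h$ and $(\tilde p_h - u_h|_\Gamma, q)_\Gamma = 0$ for all $q \in Q_h$, whereas $p_h \in Q_h$ is characterized pointwise (or rather, via the degrees of freedom) by $L_j(p_h) = L_j(u_h|_\Gamma)$ for all $j$. Since $\{L_j\}$ is a nodal (unisolvent) dual basis for $Q_h$, the condition $L_j(p_h) = L_j(u_h|_\Gamma)$ says precisely that $p_h$ is \emph{the} element of $Q_h$ agreeing with $u_h|_\Gamma$ at all the degrees of freedom, i.e. $p_h = \Pi_h(u_h|_\Gamma)$ where $\Pi_h$ is the canonical interpolant.

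For the \emph{sufficiency} direction, assume $V_h|_\Gamma \subseteq Q_h$. Then $u_h|_\Gamma \in Q_h$, so the $L^2$-projection reproduces it: $\tilde p_h = u_h|_\Gamma$. Likewise the nodal interpolant reproduces any function already in $Q_h$, so $p_h = \Pi_h(u_h|_\Gamma) = u_h|_\Gamma$. Hence $p_h = \tilde p_h$. (One should note this argument only needs $u_h|_\Gamma \in Q_h$ for the particular $u_h$ at hand, but since the claim quantifies over all $u_h \in V_h$, the hypothesis $V_h|_\Gamma \subseteq Q_h$ is the natural statement; I would phrase sufficiency as: for every $u_h$, if $u_h|_\Gamma \in Q_h$ then $p_h = \tilde p_h$.)

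For the \emph{necessity} direction, I argue by contraposition: suppose $V_h|_\Gamma \not\subseteq Q_h$, so there is some $u_h \in V_h$ with $w := u_h|_\Gamma \notin Q_h$. I must produce a $u_h$ for which $p_h \neq \tilde p_h$. The natural candidate is this same $u_h$. We have $\tilde p_h = P_{Q_h} w$, the $L^2$-orthogonal projection, and $p_h = \Pi_h w$, the nodal interpolant. If these agreed for \emph{every} such $u_h$, then in particular on the subspace $V_h|_\Gamma$ the two linear maps $P_{Q_h}$ and $\Pi_h$ (restricted to $V_h|_\Gamma$, landing in $Q_h$) coincide. The subtlety — and the main obstacle — is that it is conceivable that $P_{Q_h}$ and $\Pi_h$ happen to agree on the proper subspace $V_h|_\Gamma$ even though $w \notin Q_h$; ruling this out requires a genuine argument rather than a one-liner.

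To close the obstacle I would proceed as follows. Pick $w = u_h|_\Gamma \notin Q_h$ and set $p_h = \Pi_h w \in Q_h$, $\tilde p_h = P_{Q_h} w \in Q_h$. Suppose for contradiction $p_h = \tilde p_h =: r \in Q_h$. From $p_h = r$ we get $L_j(w) = L_j(r)$ for all $j$, i.e. $w - r$ vanishes at every degree of freedom of $Q_h$. From $\tilde p_h = r$ we get $(w - r, q)_\Gamma = 0$ for all $q \in Q_h$, i.e. $w - r \perp Q_h$ in $L^2(\Gamma)$. Now observe $w - r \neq 0$ since $w \notin Q_h$ while $r \in Q_h$. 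The contradiction is extracted by testing the orthogonality relation against $q = r$: this gives $(w, r)_\Gamma = (r,r)_\Gamma = \norm{r}_{L^2(\Gamma)}^2$; meanwhile the degrees-of-freedom condition combined with the nodal (Lagrange) property — $\psi_k$ being supported near its node and $L_j(\psi_k)=\delta_{jk}$ — would let one locate the nonzero part of $w-r$ element-by-element and build an admissible test function $q \in Q_h$ with $(w-r, q)_\Gamma \neq 0$, contradicting orthogonality. More cleanly: I would note that $\Pi_h$ is a projection onto $Q_h$ and is $L^2$-self-adjoint-\emph{complementary} to $P_{Q_h}$ only when $\ker\Pi_h \cap V_h|_\Gamma = (Q_h)^\perp \cap V_h|_\Gamma$, and since $w - \Pi_h w \in \ker\Pi_h$ while $w - P_{Q_h}w \in Q_h^\perp$, equality $\Pi_h w = P_{Q_h} w$ forces $w - \Pi_h w$ to lie in $\ker\Pi_h \cap Q_h^\perp$; but $w - \Pi_h w$ vanishes at all DOFs and is $L^2$-orthogonal to all of $Q_h$, and for a Lagrange basis the only function on $\Gamma$ whose trace lies in the (possibly larger) ambient FE space, vanishes at all $Q_h$-nodes, and is $L^2(\Gamma)$-orthogonal to $Q_h$, and equals $w - (\text{its }Q_h\text{ interpolant})$, must be zero — this last point is where one really uses that $w$ is itself a piecewise polynomial of the FE type (being $u_h|_\Gamma$), so $w - \Pi_h w$ is piecewise polynomial, vanishes at the nodes, hence is determined by the bubble/higher-mode part, and cannot be simultaneously $L^2\Gamma$-orthogonal to all shape functions unless it is identically zero on each element. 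That forces $w = \Pi_h w \in Q_h$, contradicting $w \notin Q_h$. Hence $p_h \neq \tilde p_h$, completing the contrapositive. I expect the delicate bookkeeping to be exactly this last element-by-element argument that a nonzero piecewise-polynomial trace vanishing at all the $Q_h$ nodes cannot be $L^2(\Gamma)$-orthogonal to all of $Q_h$; everything else is routine.
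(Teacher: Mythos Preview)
Your sufficiency argument is correct and more direct than the paper's: the paper introduces the Riesz representations $q_k\in Q_h$ of the functionals $L_k$ and verifies $(p_h-\tilde p_h,q_k)_\Gamma=0$ for every $k$, using the identity $\langle L_i,u_h|_\Gamma\rangle=(q_i,u_h|_\Gamma)_\Gamma$, which is precisely the step that needs $u_h|_\Gamma\in Q_h$. Your observation that both $P_{Q_h}$ and $\Pi_h$ act as the identity on $Q_h$ reaches the same conclusion with less machinery.

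For necessity, note first that the paper does not actually prove it either: it merely remarks that the hypothesis $u_h|_\Gamma\in Q_h$ ``was required to apply the Riesz theorem'' in the sufficiency computation, which is not a proof that the conclusion fails otherwise. Your attempted contrapositive has a genuine gap. The crux is the claim that a nonzero function $w-r$ (with $w\in V_h|_\Gamma$, $r\in Q_h$) which vanishes at every degree of freedom of $Q_h$ cannot be $L^2(\Gamma)$-orthogonal to all of $Q_h$. This is false in general. Take $\Gamma=[0,1]$, let $Q_h$ be the piecewise constants on a mesh with the cell-midpoint evaluation (equivalently the cell average) as degree of freedom, and let $V_h|_\Gamma$ be the continuous piecewise linears on the same mesh. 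On each cell the midpoint value of an affine function equals its mean, so $\Pi_h w=P_{Q_h}w$ for \emph{every} $w\in V_h|_\Gamma$, yet $V_h|_\Gamma\not\subseteq Q_h$. Hence the ``element-by-element'' step you flag as delicate cannot be completed, and the necessity direction as stated does not hold without additional structural assumptions on the pair $(V_h|_\Gamma,Q_h)$.
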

\begin{proof} To verify the assertion let $q_k\in Q_h$ be the Riesz
  representation of $L_k$, i.e. $(q_k, v)_{\Gamma}=\brack{L_k, v}$, $v\in Q_h$,
  and $u_h\in V_h$ arbitrary.
  Then by definition $(p_h, q_k)_{\Gamma}=\brack{L_i, u_h|_{\Gamma}}(\psi_i, q_k)_{\Gamma}$
  and
  \[
  \begin{split}
    \brack{L_i, u_h|_{\Gamma}}(\psi_i, q_k)_{\Gamma}=
    (q_i, u_h|_{\Gamma})_{\Gamma}\brack{L_k, \psi_i}=
    (q_k, u_h|_{\Gamma})_{\Gamma}=
    (q_k, \tilde{p}_h)_{\Gamma}
  \end{split}
  \]
by the property of the Riesz basis ${\set{q_k}_{k=1}^{m}}$, nodality of the basis 
$\set{\psi_i}_{i=1}^{m}$ and definition of $\tilde{p}_h$. It follows that 
$(p_h-\tilde{p}_h, q_k)_{\Gamma}=0$. Note that $u_h|_{\Gamma}\in Q_h$ was 
required to apply the Riesz theorem.
\end{proof}
The above result ensures that $\transp{\la{T}}$ has full column rank, and
consequently the matrix $\la{T}\inv{\la{A}\transp{\la{T}}}$ is non-singular.

\begin{definition}[$\Gamma$-matching spaces]
Let $\Gamma$ be a manifold in $\Omega$ and $Q_h$, $V_h$ the finite element spaces
over the respective domains. The spaces are called $\Gamma$-matching if
  (i) $V_h$ and $Q_h$ are constructed from the same elements and (ii) meshes of 
  $\Omega$ and $\Gamma$ are matched.
\end{definition}
\begin{remark}[Equivalence of interpolation and projection  trace]\label{rm:trace_eq}
  The condition from Lemma \ref{lm:trace} is satisfied with $V_h|_{\Gamma}=Q_h$ if 
  $V_h$ and $Q_h$ are $\Gamma$-matching.
\end{remark}
Finally, note that the interpolation trace is in general cheaper 
to construct than the trace due to projection. We shall employ \eqref{eq:Th}
throughout the rest of the paper. Consequently the trace matrix $\la{T}$ in \eqref{eq:sine_3d_system}
is a product of the mass matrix of the space $Q_h$ and \eqref{eq:trace_entries}.

 
Let now $V_h$, $Q_h$ be a pair of $\Gamma$-matching spaces constructed from
continuous linear Lagrange elements. Further, the discretization of the geometry
shall be such that the mesh of $\Omega$ is \textit{finer} at/near $\Gamma$ than in
the rest of the domain, cf. Table \ref{tab:fem_mesh} in Appendix \ref{sec:appB} 
and Figure \ref{fig:domains}.
This way the dimensionality of $Q_h$ is increased. Finally, we consider the Schur complement\footnote{
The Schur comeplement is computed from its definition, where the components
$\la{T}$, $\la{A}$ are assembled using FEniCS \cite{logg2012automated,fenics} and PETSc
\cite{petsc} libraries. The Laplacian matrix is then inverted by conjugate
gradient method with algebraic multigrid (AMG) preconditioner from Hypre library
\cite{hypre}. Relative tolerance $10^{-15}$ was set as a convergence criterion.
}
of \eqref{eq:trace_problem} preconditioned by different matrices $\la{H}_{s, 0}$. 
Recall that previously global trigonometric polynomial basis functions were used 
with \eqref{eq:trace_problem} and $-0.2<s\leq -0.1$ yielded condition numbers bounded 
in the discretization parameter. Figure \ref{fig:trace_fem} and Table \ref{tab:trace_fem} 
show that the same conclusions hold also if the finite element discratization is employed.

\begin{figure}
  \begin{center}
  \includegraphics[width=0.49\textwidth]{./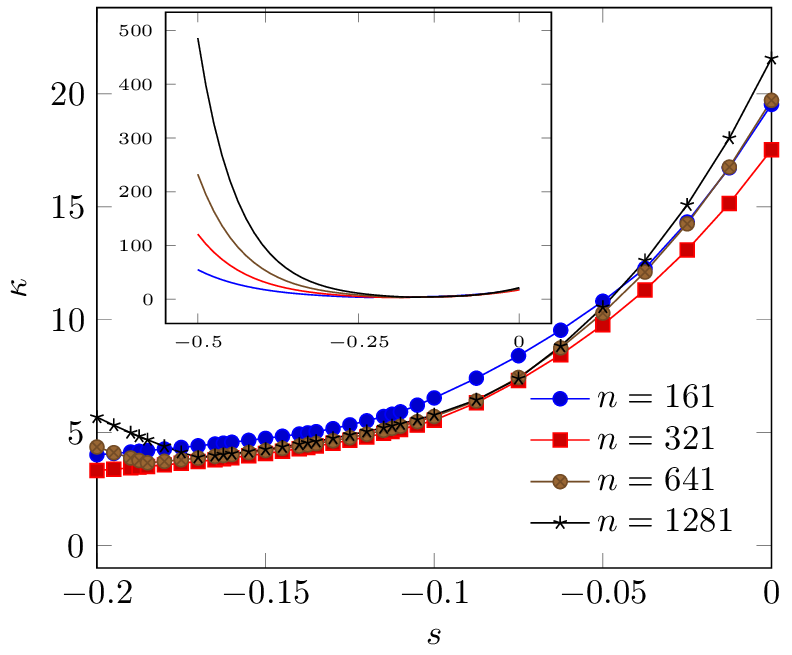}
  \includegraphics[width=0.49\textwidth]{./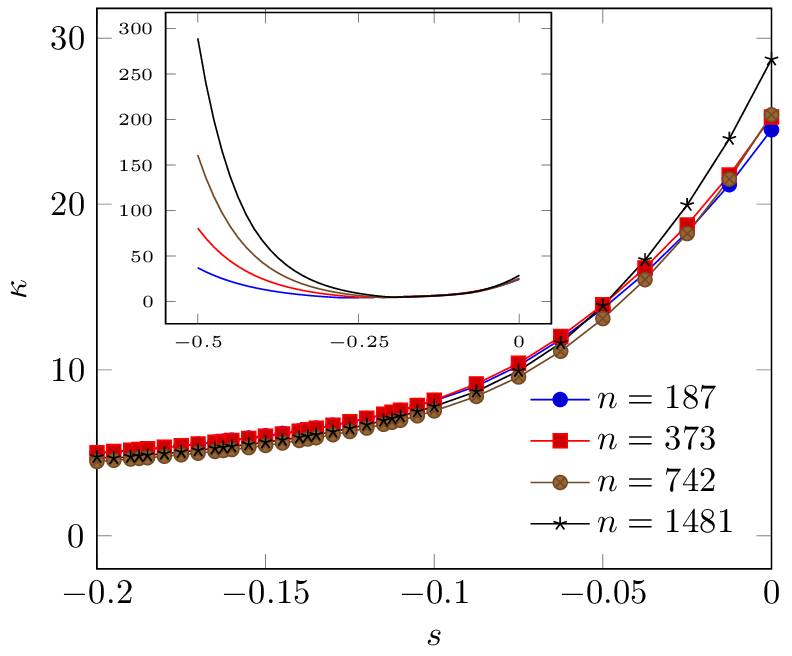}
    \caption{
      Condition numbers \eqref{eq:cond_cond} of \eqref{eq:cond_eigs} with
      finite element discretization, $n=\text{dim} Q_h$, and different
      preconditioners $\la{H}_{s, 0}$. (Left) the curve is $\Gamma_1$. (Right) the
      curve is $\Gamma_2$. The zoomed out plot shows that $s<-0.25$ yields
      unbounded $\kappa$. For both configurations exponents from the interval 
      around $s=-0.1$ yield bounded condition numbers.
    }
    \label{fig:trace_fem}
  \end{center}
\end{figure}
Figure \ref{fig:trace_fem} explores the condition numbers for $s\in\left[-0.5, 0\right]$.
It is evident, cf. the zoom-out plot, that for $s<-0.25$, $\la{H}_{s, 0}$ is not
a good preconditioner for the Schur complement. For both configurations there
are exponents in $(-0.2, -0.1)$ that lead to bounded condition numbers. For
several values of $s$ in this interval, the condition numbers observed on a
sequence of uniformly refined meshes are reported in Table \ref{tab:trace_fem}. 
Therein $s\leq-0.1$ can be observed to lead to bounded $\kappa$. Exponent $s=0$, 
i.e. the $L^2$ norm, leads to a slight growth in $\kappa$ with both $\Gamma_1$ and $\Gamma_2$. 
\begin{table}
  \begin{center}
    \caption{
      Condition numbers \eqref{eq:cond_cond} of \eqref{eq:cond_eigs} for
      selected values of $s$. The finite element
discretization is considered on a sequence of uniformly refined meshes, see
Table \ref{tab:fem_mesh}. For each discretization the mesh is finer near
the curve than in the rest of the domain.
Exponent $s=-0.14$ observed in the spectral discretization, cf. Table
\ref{tab:sine_3d}, yields bounded $\kappa$ also with discrization by FEM. 
Note that similar to the spectral discretization there is a slight growth of $\kappa$ for $s=0$.}
\footnotesize{
\begin{tabular}{c|ccccc||ccccc}
\hline
  \multirow{2}{*}{L\textbackslash $s$}
  & 
\multicolumn{5}{c||}{$\Gamma_1=\set{(t, \tfrac{1}{2}, \tfrac{1}{2}); t\in\left[0, 1\right]}$} 
  & 
\multicolumn{5}{c}{$\Gamma_2=\set{(t, t, t); t\in\left[0, 1\right]}$}\\
\cline{2-11}
  &  -0.16 & -0.14 & -0.12 & -0.1 & 0 & -0.16 & -0.14 & -0.12 & -0.1 & 0\\
\hline
1 & 4.568 & 4.932 & 5.517 & 6.531 & 19.530 & 5.760 & 6.316 & 7.064 & 8.129 & 24.484\\
2 & 3.883 & 4.282 & 4.804 & 5.545 & 17.525 & 5.743 & 6.300 & 7.085 & 8.175 & 25.253\\
3 & 4.023 & 4.400 & 4.927 & 5.710 & 19.713 & 5.192 & 5.744 & 6.488 & 7.525 & 25.386\\
4 & 4.062 & 4.477 & 5.045 & 5.781 & 21.561 & 5.381 & 5.926 & 6.698 & 7.798 & 28.731\\
\hline
\end{tabular}
\label{tab:trace_fem}
}
\end{center}
\end{table}

We note that in both configurations the behaviour of the eigenvalues is similar to 
the spectral case. In particular, $\lambda_{\max}$ and $\lambda_{\min}$ grow for 
$s\leq-0.1$, whereas for $s=0$ only $\lambda_{\max}$ grows while $\lambda_{\min}$ 
is bounded by a constant, see Table \ref{tab:trace_fem_eigs}. Since the extremal
eigenvalues are in general unbounded $\la{H}_{s, 0}$ is not a
discretization of an operator spectrally equivalent to the Schur complement
and, similar to \S\ref{sec:trace_spectral} the results of FEM disprove
conjecture \ref{thm:equiv}. However, the relation observed in the experiments
\begin{equation}\label{eq:speceq}
  0<\lambda_{\min}(s, h)
  \leq 
  \frac{\transp{\la{x}}\la{T}\inv{\la{A}}\transp{\la{T}}\la{x}}
  {\transp{\la{x}}\la{H}_{s, 0}\la{x}}
  \leq
  \lambda_{\max}(s, h)\quad\forall \la{x}\in\reals^m
\end{equation}
suggests existence of a mesh dependent scale in which spectral equivalence can
be achieved, cf. also results of \S \ref{sec:trace_spectral}. In particular, rescaling
the $s$-norm matrix as $\lambda_{\min}(s, h)\la{H}_{s, 0}$ 
leads to constant bounds, cf. observed constant spectral condition number. We remark 
that $\lambda_{s, \min}$ is bounded away from zero for all $h$ and $s$ observed, in fact the eigenvalue
increases with $h^{-1}$, and in this sense the discrete inf-sup constant never 
approaches zero.

Computational results with the spectral basis and FEM both suggest 
to the construction of the Schur complement preconditioner based on the
mesh dependent $s$-norm $\lambda_{\min}(s, h)\la{H}_{s, 0}$. However, as noted
before, obtaining the scaling factor is
computationally expensive and we shall therefore proceed with \eqref{eq:snorm}
only and not include the scale. In particular, the exponents $s$ identified previously shall
be used to construct preconditioners for several 3$d$-1$d$ constrained problems. We note
that the bounds \eqref{eq:speceq} enter estimates for convergence of
iterative solvers, see, e.g., \cite{silvester1994fast}, and since the bounds here are 
not constant, the proposed preconditioners are theoretically suboptimal. Nevertheless, 
the number of iterations in the studied examples will be bounded. We remark
that the smallest and largest eigenvalues are never far from unity in our examples.

\begin{table}
  \begin{center}
    \caption{
Smallest and largest eigenvalues of the $\la{H}_{s, 0}$ preconditioned Schur 
    complement considered in Table \ref{tab:trace_fem}.
    Similar to spectral discretization both the
extremal eigenvalues grow for $s=-0.14$ while the lower bound is constant and the
upper one grows for $s=0$.
}
\footnotesize{
\begin{tabular}{c|cc||cc}
\hline
\multirow{2}{*}{L}
  & 
\multicolumn{2}{c||}{$\Gamma_1=\set{(t, \tfrac{1}{2}, \tfrac{1}{2}); t\in\left[0, 1\right]}$} 
  & 
\multicolumn{2}{c}{$\Gamma_2=\set{(t, t, t); t\in\left[0, 1\right]}$}\\
\cline{2-5}
  &  $s=-0.14$ & $s=0$ & $s=-0.14$ & $s=0$\\
\hline
1 & (0.290, 1.433) & (0.051, 1.000) & (0.207, 1.310) & (0.041, 1.000)\\
2 & (0.420, 1.799) & (0.059, 1.040) & (0.256, 1.610) & (0.041, 1.026)\\
3 & (0.502, 2.208) & (0.059, 1.161) & (0.342, 1.965) & (0.045, 1.145)\\
4 & (0.603, 2.701) & (0.059, 1.276) & (0.401, 2.379) & (0.044, 1.265)\\
\hline
\end{tabular}
\label{tab:trace_fem_eigs}
}
\end{center}
\end{table}

\section{Trace coupled problems}\label{sec:coupled}
The previous experiments revealed a range of negative exponents $s$ for which matrices
$\la{H}_s$ behaved similarly to the Schur complement, in terms of stability of the 
condition number, of the related generalized eigenvalue problem. 
To simplify the discussion, we pick $s=-0.14$ and employ the exponent 
to construct preconditioners for two model 3$d$-1$d$ coupled problems.
We note that this choice is somewhat arbitrary and based on \S\ref{sec:trace}
other exponents $s=-0.16$, cf. Table \ref{tab:trace_fem}, could have been
used.

\subsection{Babu\v{s}ka's problem}\label{sec:bab}
Let $V_h$, $Q_h$ be a pair of $\Gamma$-matching spaces constructed by
continuous linear Lagrange elements and consider the problem: 
Find $u\in V_h\subset H^1(\Omega)$, $p\in Q_h\subset H^1(\Gamma)$ such 
that
\begin{equation}\label{eq:bab}
  \begin{aligned}
    (\nabla u, \nabla v)_{\Omega} + (u, v)_{\Omega}+ (p, T v)_{\Gamma} &= (f, v)_{\Omega} &\quad v\in V_h,\\
  (q, T u)_{\Gamma} &= (q, g)_{\Gamma} &\quad q\in Q_h.
  \end{aligned}
\end{equation}
The system \eqref{eq:bab} is a Lagrange multiplier formulation of the 
minimization problem for $v\mapsto \norm{v}^2_{H^1(\Omega)}-2(f, v)_{\Omega}$,
with the constraint $Tv-g=0$ on $\Gamma$. The problem is considered
with homogeneous Neumann boundary conditions. A similar problem with
$\Omega\subset\reals^2$ and $\Gamma\subset\partial\Omega$ was first studied in
\cite{babuska_lm} to introduce Lagrange multipliers as means of prescribing
boundary data.

\begin{figure}
    \begin{minipage}{0.5\textwidth}
      \includegraphics[width=0.45\textwidth]{./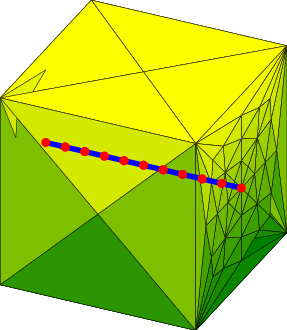}
      \includegraphics[width=0.45\textwidth]{./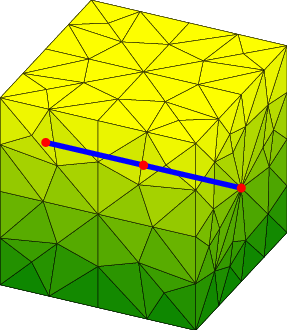}\\
      \includegraphics[width=0.45\textwidth]{./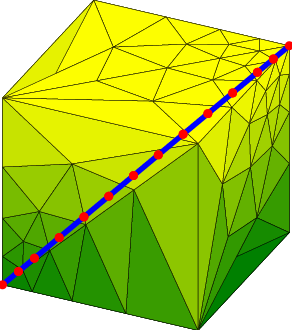}
      \includegraphics[width=0.45\textwidth]{./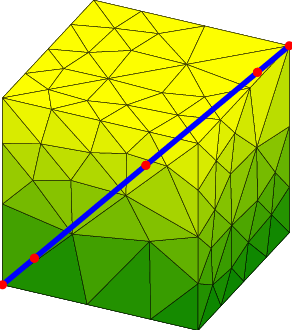}\\
    \end{minipage}%
    \begin{minipage}{0.5\textwidth}
      \includegraphics[width=\textwidth]{./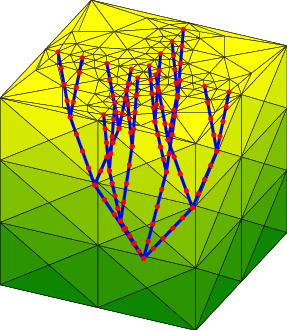}
    \end{minipage}
\caption{Domains used in experiments with matching discretization.
  The one dimensional curve $\Gamma$ is drawn in blue with 
  element boundaries signified by red dots. (Left) The curve is, respectively, a horizontal 
  or diagonal segment. The triangulation of $\Omega$ is either refined or coarsened
  at $\Gamma$. (Right) The curve contains branches and bifurcations, thus
  capturing some of the features of complex vascular systems.
}
\label{fig:domains}
\end{figure}
Similar to the Schur complement study in Section \ref{sec:trace_fem}, the problem shall be considered with
two different curves $\Gamma$. Moreover, for each configuration we consider
three different sequences of uniformly refined meshes, to investigate 
numerically whether the construction of the preconditioner relies on a quasi-uniform 
mesh, or if shape-regular elements are sufficient. 
In a \textit{uniform} discretization the characteristic 
mesh size of $\Omega$ and $\Gamma$ are identical and the tessellation of $\Omega$ is 
structured. In \textit{finer} and \textit{coarser} discretizations the mesh is 
unstructured and is either finer or coarser near $\Gamma$ than in the rest of
the domain. The example meshes are pictured in Figure \ref{fig:domains}. Information 
about the parameters of the discretizations and sizes of
the corresponding finite element spaces are then summarized in Table \ref{tab:fem_mesh}.

Since \eqref{eq:bab} is considered with Neumann boundary conditions, the block
diagonal preconditioner for the system shall have the multiplier block based on
$\la{H}_{s}$ (not $\la{H}_{s, 0}$). We propose the following preconditioned linear system
\begin{equation}\label{eq:bab_precond}
  \inv{\begin{bmatrix}
  \la{A}+\la{M} & \\
              & \la{H}_{-0.14}
  \end{bmatrix}}
\begin{bmatrix}
\la{A}+\la{M}                     & \transp{(\la{M}_{\Gamma}\la{T})}\\
(\la{M}_{\Gamma}\la{T})               &                              \\
\end{bmatrix}
\begin{bmatrix}
  \la{u}\\
  \la{p}\\
\end{bmatrix}
=
  \inv{\begin{bmatrix}
  \la{A}+\la{M} & \\
              & \la{H}_{-0.14}
  \end{bmatrix}}
\begin{bmatrix}
  \la{f}\\
  \la{g}\\
\end{bmatrix},
\end{equation}
%
where $\la{M}$ and $\la{M}_{\Gamma}$ are, respectively, the mass matrices of $V_h$ and
$Q_h$. We remark that the proposed preconditioner is not theoretically optimal
because of the estimate \eqref{eq:speceq}.

In our implementation the leading block of the preconditioner is realized by a
single $V$  cycle of algebraic multigrid from the Hypre\footnote{We have used default values of all the parameters.}
library \cite{hypre}. The system is then solved iteratively with the minimal
residual method (MINRES) implemented in cbc.block \cite{block} and requiring a
preconditioned residual norm smaller than $10^{-12}$ for convergence. 
The initial vectors were random.

The recorded iterations counts are reported in Table \ref{tab:trace}. It can be
seen that the proposed preconditioner results in a bounded number of iterations for all
the considered geometrical configurations and their discretizations. In the
table we also report iteration counts for the preconditioner that employs
$\la{H}_0=\la{M}_{\Gamma}$ for the multiplier block. Recall that with $s=0$ and
spectral discretization, the spectral condition number of the preconditioned Schur 
complement showed a logarithmic growth, cf. Table \ref{tab:sine_3d}. Using FEM, 
the growth was less evident (see Table \ref{tab:trace_fem}), however, the condition 
number was significantly larger than for $s=-0.14$. The iteration counts 
agreee with this observation; the $L^2$ norm leads to at least 20 more iterations.
We remark that the norms in which the convergence criterion is measured differ
between the two cases.
\begin{table}
  \begin{center}
\caption{Iteration counts for preconditioned Babu\v{s}ka's problem \eqref{eq:bab} 
with preconditioners based on \eqref{eq:smat} and $s=-0.14$ or $s=0$ (discrete $L^2$ norm). 
Two geometric configurations and their different discretizations ($L$ denotes the refinement level) 
are considered cf. Figure \ref{fig:domains} and Table \ref{tab:fem_mesh}.
Both preconditioners yield bounded number of iterations. The $L^2$ norm leads to a 
less efficient preconditioner.
}
\footnotesize{
\begin{tabular}{c|ccc||ccc}
\hline
  \multirow{2}{*}{L} 
  &
\multicolumn{3}{c||}{$\Gamma_1=\set{(t, \tfrac{1}{2}, \tfrac{1}{2}); t\in\left[0, 1\right]}$} 
  & 
\multicolumn{3}{c}{$\Gamma_2=\set{(t, t, t); t\in\left[0, 1\right]}$}\\
\cline{2-7}
& uniform & finer & coarser & uniform & finer & coarser \\
\hline
2 & (28, 59) & (53, 81) & (44, 46) & (29, 57) & (73, 107) & (62, 71)\\
3 & (27, 68) & (52, 82) & (49, 58) & (27, 59) & (69, 103) & (64, 81)\\
4 & (25, 70) & (52, 83) & (47, 62) & (25, 61) & (69, 105) & (67, 88)\\
5 & (23, 70) & (53, 83) & (51, 71) & (25, 62) & (70, 105) & (67, 91)\\
\hline
\end{tabular}
\label{tab:trace}
}
\end{center}
\end{table}

\subsection{Model multiphysics problem}\label{sec:multi}
Building upon the Babu\v{s}ka problem we next consider a model multiphysics
problem \eqref{eq:pde}. A similar problem with $\Omega\subset\reals^2$ and 
$\Gamma$ a manifold of codimension one was previously studied by the authors in \cite{2d-1d}. 
Therein it was found that the problem is well posed with the Lagrange multiplier in the
intersection space $H^{\nhalf}(\Gamma)\cap H^{-1}(\Gamma)$. The structure of the
space was mirrored by the preconditioner, which used $\inv{(\la{H}_{-0.5}+\la{H}_{-1})}$
in the corresponding block. 

We note that the exponent $-\tfrac{1}{2}$ was dictated by the properties of the 
continuous trace operator. In the 3$d$-1$d$ case, which is of interest here, we 
shall instead base the exponent/preconditioner on the previous numerical experiments. 
More specifically, the linear system obtained by considering
\eqref{eq:weak_form_h} on finite dimensional finite element subspaces
\begin{equation}\label{eq:coupled_system}
  \begin{bmatrix}
    \la{A}_{\Omega}+\la{M}_{\Omega} & \phantom{0} &\transp{(\la{M}_{\Gamma}\la{T})}\\
    \phantom{0}  & \la{A}_{\Gamma} + \la{M}_{\Gamma} &-\la{M}_{\Gamma}\\
    (\la{M}_{\Gamma}\la{T})      & -\la{M}_{\Gamma}          & \phantom{0}
  \end{bmatrix}
  \begin{bmatrix}\la{u}\\
                 \la{w}\\
                 \la{p}\end{bmatrix}
  =
  \begin{bmatrix}\la{f}\\
                 \la{g}\\
                 \la{h}\end{bmatrix}
\end{equation}
shall be considered with the preconditioner
\begin{equation}\label{eq:coupled_precond}
  \inv{\begin{bmatrix}
    \la{A}_{\Omega}+\la{M}_{\Omega} & \phantom{0} & \phantom{0}\\
    \phantom{0}   & \la{A}_{\Gamma}+\la{M}_{\Gamma} & \phantom{0}\\
    \phantom{0}   &      & \la{H}_{-0.14} + \la{H}_{-1}\\
  \end{bmatrix}}.
\end{equation}
Note that in \eqref{eq:coupled_precond} the structure of the trailing block mimics
the related 2$d$-1$d$ problem. We remark that in the implementation, the remaining
two blocks are realized by AMG. Moreover the discrete spaces are such that
$W_h=Q_h$ and $V_h$, $Q_h$ are $\Gamma$-matching. As in the previous example,
continuous linear Lagrange elements are used.
To demonstrate the performance of the preconditioner, \eqref{eq:weak_form_h} is
considered on the same geometrical configurations and their discretizations as
\eqref{eq:bab}. The preconditioned system is then solved by MINRES, starting from 
a random initial vector and terminating if the preconditioned residue is less
than $10^{-12}$ in magnitude. As can be seen in Table \ref{tab:coupled}, the 
preconditioner yields bounded iteration counts. Interestingly, the convergence 
is faster on the \textit{finer} discretization than on the \textit{coarser} one. 
We note that the systems on the latter discretization are in general of smaller
size and have more than a factor 10 fewer degrees of freedom in $Q_h$.
However, $\dim Q_h \ll \dim V_h$ is a desirable feature of the model order
reduction which was applied to obtain the problem on $\Gamma$.  
\begin{table}
  \begin{center}
    \caption{Iteration counts for the model problem \eqref{eq:coupled_system}
with preconditioner \eqref{eq:coupled_precond}. Spatial configurations and
    disretizations from Table \ref{tab:trace} are considered. In all the cases 
    the number of iterations is bounded.
}
\footnotesize{
\begin{tabular}{c|ccc||ccc}
\hline
\multirow{2}{*}{L} 
  &
\multicolumn{3}{c||}{$\Gamma_1=\set{(t, \tfrac{1}{2}, \tfrac{1}{2}); t\in\left[0, 1\right]}$} 
  & 
\multicolumn{3}{c}{$\Gamma_2=\set{(t, t, t); t\in\left[0, 1\right]}$}\\
\cline{2-7}
& uniform & finer & coarser & uniform & finer & coarser \\
\hline
2 & 51 & 45 & 42 & 44 & 62 & 62\\
3 & 49 & 45 & 48 & 43 & 59 & 62\\
4 & 47 & 43 & 47 & 43 & 59 & 64\\
5 & 46 & 43 & 49 & 42 & 59 & 66\\
\hline
\end{tabular}
\label{tab:coupled}
}
\end{center}
\end{table}

In the examples presented thus far, $\Gamma$ was always a straight segment. To
show that the preconditioner \eqref{eq:coupled_precond} (or the general idea of
$\la{H}_s$ based preconditioners for 3$d$-1$d$ problems) is not limited to such 
simple curves, we shall in the final example consider \eqref{eq:weak_form_h} with 
$\Gamma$ having a more complicated stucture. The considered domain, pictured in
the right pane of Figure \ref{fig:domains}, is inspired by biomechnical applications 
and is intended to mimic some of the features of the vasculature. In particular, the domain
consists of numerous branches and contains multiple bifurcations.

Repeating the setup of the previous experiment, Table \ref{tab:coupled_tree}
reports the iteration counts for the \eqref{eq:coupled_precond} preconditioned linear
system \eqref{eq:coupled_system}, obtained by considering \eqref{eq:weak_form_h} 
on the complex $\Gamma$. The number of iterations is clearly bounded.

The good performance of the proposed preconditioner in all the considered
examples brings in the question of practicability of its construction. Here, the 
question shall be addressed by considering the setup costs of the preconditioner 
for the domain with complex $\Gamma$. The choice is motivated by the fact that 
(i) the domain is potentially relevant for practical applications and (ii) the 
large (relative to $\dim V_h$) number of degrees of freedom of $Q_h$ puts the
emphasis on the construction of \eqref{eq:smat}. We note that the costs are expected to be 
determined by the multigrid setup and the solution time of the generalized 
eigenvalue problem \eqref{eq:smat}. As in \cite{2d-1d} the eigenvalue problem is
solved by the DSYGVD routine from LAPACK \cite{lapack}. 

The timings obtained on a Linux machine with a single Intel Xeon E5-2680 CPU 
with 2.5GHz and 32GB of RAM are reported in Table \ref{tab:coupled_tree}. The
observed costs of the eigenvalue solve are 3-4 times smaller than that of the multigrid 
setup, and thus the spectral construction does not present a bottleneck. Morover, 
both AMG and GEVP are expected to scale roughly as $\dim{Q_h}^3$. However, due
to the cubic scaling, the system/preconditioner is unlikely to be assembled/setup in
serial. For such a case, a scalable parallel implementation, for the construction of \eqref{eq:smat}, 
remains an issue, and approaches that provide the approximate action of $\la{H}_s$ 
matrices may offer better performance. Examples of such approaches are
the \cite{arioli_siam, arioli_ima} and \cite{harizanov2016optimal} where
polynomial and rational function approximations are constructed, fast Fourier transforms \cite{fft}
or methods \cite{contour, bonito2015numerical} based on integral definitions of fractional Laplacian
\cite{kwasnicki2017ten}.

\begin{table}
  \begin{center}
    \caption{
Iteration counts and setup costs (in seconds) for system \eqref{eq:coupled_system} 
and preconditioner \eqref{eq:coupled_precond}. Both operators are assembled for 
the complex $\Gamma$ pictured in Figure \ref{fig:domains}. The number of iterations 
is bounded in the discretization parameter. In the considered example, the eigenvalue
(GEVP) based construction \eqref{eq:smat} does not present a bottleneck as it is 3-4
times cheaper than setting up the algebraic multigrid (AMG).
}
\footnotesize{
\begin{tabular}{ccc|cc}
\hline
  $\dim{V_h}$ & $\dim{Q_h}$ & \# & $\text{AMG}\left[s\right]$ & $\text{GEVP}\left[s\right]$\\
\hline
18K  & 817  & 86 & 0.2   & 0.1\\
100K & 1605 & 81 & 1.9   & 0.6\\
634K & 3193 & 76 & 15.0  & 4.2\\
4.8M & 6381 & 68 & 141.6 & 36.4\\
\hline
\end{tabular}
\label{tab:coupled_tree}
}
\end{center}
\end{table}

\section{Nonmatching discrete trace}\label{sec:unmatch}
The numerical examples presented thus far have always employed $\Gamma$-matching
finite element spaces. We note that in \cite{2d-1d} this construction is shown
to imply that the discrete inf-sup condition holds for problems \eqref{eq:bab} and 
\eqref{eq:weak_form_h} considered with $\Omega\subset\reals^2$ and $\Gamma$ a 
one dimensional curve. However, the assumption of matched discretizations of
$\Omega$ and $\Gamma$ can be too limiting, e.g, if fine resolution is requested 
on the curve. In this section we present numerical examples using the Babu\v{s}ka 
problem \eqref{eq:bab}, which demonstrate that the matching discretization assumption
is not necessary and to the extent given by the new inf-sup condition the discretizations can be 
independent. Using such stable discretizations and preconditioners based on characterization 
of the trace the observed number of Krylov iterations will remain bounded.

\subsection{Codimension 1}\label{sec:uncut_cod1}
Consider \eqref{eq:bab} with $\Omega\subset\reals^2$. 
For $\Gamma\subset\partial\Omega$, the finite element discretization of the
problem requires that the spaces $V_h$, $Q_H$ (we use different subscripts to
indicate the difference in underlying triangulations) are such that $h\leq cH$ for
some $c<1$. Here $h$ is understood as a mesh size of $V_h$ on $\Gamma$. The
inequality ensures that the discrete inf-sup condition is satisfied
\cite{steinbach2007numerical, Dahmen2001}.

Let now $\Gamma$ be a curve, contained in $\Omega$, where the domains are discretized 
such that the condition from the previous paragraph is met. Further, the space
$V_h$ shall be discretized by continuous linear Lagrange elements, while, for the
construction of $Q_H$, either the same elements or piecewise 
constant Lagrange elements are employed. We note that with the latter choice the
eigenvalue problem for the discrete $s$-norm simplifies, since the mass matrix is
diagonal in this case.

\begin{figure}
    \begin{minipage}{0.22\textwidth}
      \includegraphics[width=\textwidth]{./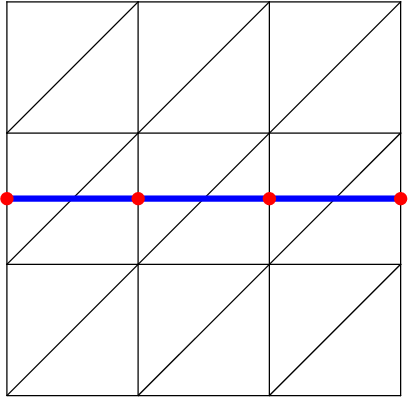}\\
      \includegraphics[width=\textwidth]{./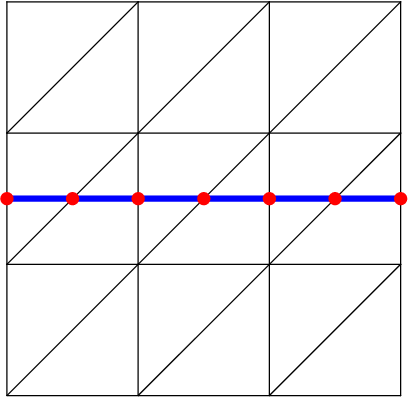}
    \end{minipage}
    \hspace{5pt}
    \begin{minipage}{0.73\textwidth}
      \includegraphics[width=0.48\textwidth]{./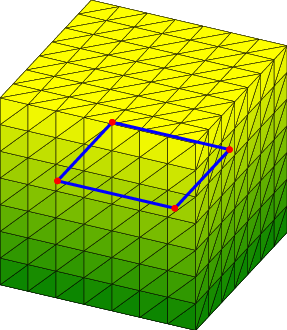}
      \includegraphics[width=0.48\textwidth]{./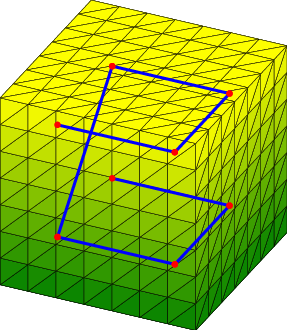}
    \end{minipage}
  \caption{Domains used in experiments with nonmatching discretization. (Left) The spaces $V_h$ and $Q_H$ 
  are inf-sup stable for \eqref{eq:bab} if $h\leq cH$, $c<1$. The condition is satisfied/violated 
  in the top/bottom configurations.
  (Right) The 3$d$-1$d$ experiments use two curves $\Gamma$. The mesh of $\Omega$ is 
  obtained by first subdividing the domain into odd number of cubes in each direction. 
  Thus degrees of freedom of $V_h$, $Q_H$ are not associated with identical spatial points.
  Moreover $h\ll H$ is ensured in the refinement.
  }
\label{fig:domains_cut}
\end{figure}
Table \ref{tab:uncut_2d_circle} reports the number of MINRES iterations on the
system \eqref{eq:bab}, using $\text{diag}(\text{AMG}(\la{A}+\la{M}), \inv{\la{H}_{-0.5}})$
as the preconditioner. The iterations are started from a random vector using
$10^{-12}$ as the stopping tolerance for the magnitude of the preconditioned
residual. With both considered finite element discretizations of the multiplier
space the number of iterations is bounded indicating (i) that the inf-sup condition 
is satisfied and (ii) the optimality of the preconditioner. We note that for $h>H$,
the inf-sup condition is violated and in turn the the iterations are unbounded (not reported here).
An example of a pair of inf-sup stable and unstable discretizations 
is shown in Figure \ref{fig:domains_cut}.

\begin{table}[ht!]
  \begin{center}
    \caption{
Iteration counts and error convergence for \eqref{eq:bab} and $\Omega$ a unit
square and $\Gamma$ a circle. The spaces $V_h$ and $Q_H$ are formed either by continuous 
linear Lagrange elements or $Q_H$ 
uses discontinuous piecewise constant Lagrange elements. Note that $\Gamma$ is closed 
and thus $Q_H$ has the same dimension with either of the elements. The inequality 
$h\leq cH$, $c<1$ is respected ensuring that the inf-sup condition \cite{steinbach2007numerical, Dahmen2001} is satisfied. 
Consequently the iteration count is bounded. Both pairs yield optimal, order 1, 
convergence in $H^1(\Omega)$ norm of the error $u-u_h$. We note that the exact 
solution is smooth. The error of the Lagrange multiplier measured in the 
$s=-\tfrac{1}{2}$ norm \eqref{eq:snorm} (computed on $Q_H$) norm decays with order 1.5.
}
\footnotesize{
\begin{tabular}{cc|ccc||ccc}
\hline
  \multirow{2}{*}{$\dim V_h$} 
  &
  \multirow{2}{*}{$\dim Q_H$} 
  &
  \multicolumn{3}{c||}{$Q_H$ continuous} 
  & 
  \multicolumn{3}{c}{$Q_H$ discontinuous}\\
\cline{3-8}
  & & \# & $\norm{u-u_h}_V$ & $\norm{p-p_h}_{Q}$ & 
      \# & $\norm{u-u_h}_V$ & $\norm{p-p_h}_{Q}$ \\
\hline
22K & 136 & 52 & 9.54E-02 & 5.28E-03 & 47 & 9.54E-02 & 3.68E-03\\
87K & 272 & 52 & 4.78E-02 & 1.71E-03 & 48 & 4.78E-02 & 1.15E-03\\
348K & 544 & 51 & 2.39E-02 & 5.77E-04 & 49 & 2.39E-02 & 4.18E-04\\
1.4M & 1088 & 51 & 1.19E-02 & 1.87E-04 & 50 & 1.19E-02 & 1.49E-04\\
\hline
\end{tabular}
\label{tab:uncut_2d_circle}
}
\end{center}
\end{table}

\subsection{Codimension 2} Due to the difficulties with the trace
operator for $\Gamma$ a manifold of codimension two, cf. \S \ref{sec:prelims},
the functional setting of \eqref{eq:bab} is not clear and therefore corresponding
discrete inf-sup conditions for the problem is not available. However, we shall assume
that the inequality $h\leq cH$, $c<1$, which was cruacial for the 2$d$-1$d$ problems,
plays a role also in the 3$d$-1$d$ case and discretize the domains  accordingly.

The problem \eqref{eq:bab} is considered with two carefully constructed curves 
$\Gamma$, see Figure \ref{fig:domains_cut}, and $\Omega$ a unit cube discretized 
such that the inequality is ensured. As before, the spaces $Q_H$ are constructed 
from continuous piecewise linear or discontinuous piecewise constant Lagrange elements. 
We note that $\dim Q_h\ll \dim V_h$. Further, the MINRES iterations use the 
same initial and convergence conditions as in \S\ref{sec:uncut_cod1}, while
$\text{diag}(\text{AMG}(\la{A}+\la{M}), \inv{\la{H}_{-0.14}})$ is used as the
preconditioner. In Table \ref{tab:uncut_3d} we observe that the discretization
and the preconditioner lead to bounded iteration counts.
We note that if the discretization of $\Gamma$ violates the inequality
$h < c H$, the number of iterations cannot be bounded anymore.

\begin{table}[ht!]
  \begin{center}
    \caption{
Iteration counts for \eqref{eq:bab_precond} posed on $\Omega\subset\reals^3$ 
and the two curves pictured in Figure \ref{fig:domains_cut}. For each domain,
$Q_H$ from continuous linear (first column) or discontinuous constant (second column) 
Lagrange elements is considered. The domains are discretized such that
$h\leq cH$, $c<1$. In all the cases, the number of iterations is bounded.
}
\footnotesize{
\begin{tabular}{c|cc|cc||cc|cc}
\hline
  \multirow{2}{*}{$\dim V_h$} 
  &
\multicolumn{4}{c||}{Square} 
  & 
\multicolumn{4}{c}{Spiral}\\
\cline{2-9}
  & $\dim Q_H$ & \# & $\dim Q_H$ & \# & $\dim Q_H$ & \# & $\dim Q_H$ & \#\\
\hline
33K & 16 & 36 & 16 & 24 & 29 & 48 & 28 & 36\\
262K & 32 & 38 & 32 & 24 & 57 & 48 & 56 & 35\\
2.1M & 64 & 36 & 64 & 23 & 113 & 46 & 112 & 35\\
6.0M & 128 & 38 & 128 & 24 & 225 & 48 & 224 & 36\\
\hline
\end{tabular}
\label{tab:uncut_3d}
}
\end{center}
\end{table}

\section{Conclusions}\label{sec:conclusions}
We have discussed preconditioning of a model multiphysics problem \eqref{eq:pde}, 
where two elliptic subproblems were coupled by a trace constraint, bridging the
dimensionality gap of size two. In order to facilitate the re-use of standard
multilelevel preconditioners for the 3$d$ domain we considered the trace as a
mapping from $H^1(\Omega)$ to $H^s(\Gamma)$ for some $s<0$ and consequently
conjectured that the Schur complement of \eqref{eq:pde} is related to the
fractional Laplacian $(-\Delta)^s$. Using a simpler problem \eqref{eq:trace_problem}
the spectral equivalence was investigated by a series of numerical experiments
revealing for $s\in (-0.2, -0.1)$ existence of a mesh-dependent scale $\tau(s, h)$
such that $\tau(s, h)(-\Delta_h)^s$ is a robust preconditioner for the Schur complement.
As the scale is, in general, impractical to compute only the fractional Laplacian
was further used in preconditioning the coupled problem \eqref{eq:pde}.
Robustness of the proposed preconditioner was demonstrated by numerical
experiments with curves of different complexity and various shape-regular
meshes using, at first, the assumption $V_h|_{\Gamma}=Q_h$ and finally with
spaces $V_h$, $Q_H$ satisfying the compatibility condition $h\leq cH$, $c<1$
inspired by 2$d$-1$d$ problems \cite{steinbach2007numerical, Dahmen2001}.

\appendix

\section{Geometrical configurations and their discretization}\label{sec:appB}
Numerical experiments with the Schur complement in \S\ref{sec:trace_fem} and the
coupled problem in \S\ref{sec:coupled} are considered on sequences of uniformly
refined meshes, discretizing the geometrical configurations shown in Figure
\ref{fig:domains}. The Schur complement experiment is considered with straight
segments $\Gamma_1=\set{(t, \tfrac{1}{2}, \tfrac{1}{2}); t\in\left[0, 1\right]}$ 
or $\Gamma_2=\set{(t, t, t); t\in\left[0, 1\right]}$. For each case the domains
are discretized in three ways: (\textit{uniform}) the meshes for $\Omega$, $\Gamma$ have 
the same characteristic size, (\textit{finer}) the mesh of $\Omega$ is finer at $\Gamma$ than 
in the rest of the domain, (\textit{coarser}) the mesh of $\Omega$ is coarser at $\Gamma$ than 
in the rest of the domain. Parameters of the meshes for each refinement level
are summarized in Table \ref{tab:fem_mesh}.

\begin{table}[ht!]
  \begin{center}
    \caption{
Sizes of FEM spaces and mesh parameters for different levels of
refinements ($L$). The length of the largest cell in the mesh of $\Gamma_i$ is
    denoted by $H$. For readability the reported value is $H\times 10^{3}$.
Lengths of smallest/largest edges of cells of the mesh for 
$\Omega\setminus\Gamma_i$ are respectively $h_{\min}$ and $h_{\max}$. (Top) In 
    \textit{uniform}
discretization the characteristic mesh size of $\Omega$ and $\Gamma_i$ triangulations 
    are identical. (Middle) \textit{Finer} discretization uses finer mesh near $\Gamma_i$.
    (Bottom) In the \textit{coarser} cases the mesh of $\Omega$ is coarser near 
    the curve.
}
\footnotesize{
\begin{tabular}{c|ccccc||ccccc}
\hline
\multirow{2}{*}{L} 
  &
\multicolumn{5}{c||}{$\Gamma_1=\set{(t, \tfrac{1}{2}, \tfrac{1}{2}); t\in\left[0, 1\right]}$} 
  & 
\multicolumn{5}{c}{$\Gamma_2=\set{(t, t, t); t\in\left[0, 1\right]}$}\\
\cline{2-11} & 
$\dim{V_h}$ & $\dim{Q_H}$ & $\tfrac{h_{\min}}{H}$ & $\tfrac{h_{\max}}{H}$ & $H$ & 
$\dim{V_h}$ & $\dim{Q_H}$ & $\tfrac{h_{\min}}{H}$ & $\tfrac{h_{\max}}{H}$ & $H$\\
\hline
1& 5K & 17 & 1.7 & 1.7 & 62.5 & 5K & 17 & 1.0 & 1.0 & 108.3\\
2& 36K & 33 & 1.7 & 1.7 & 31.2 & 36K & 33 & 1.0 & 1.0 & 54.1\\
3& 275K & 65 & 1.7 & 1.7 & 15.6 & 275K & 65 & 1.0 & 1.0 & 27.1\\
4& 2.1M & 129 & 1.7 & 1.7 & 7.8 & 2.1M & 129 & 1.0 & 1.0 & 13.5\\
5& 6.1M & 183 & 1.7 & 1.7 & 5.5 & 6.1M & 183 & 1.0 & 1.0 & 9.5\\
\hline
\hline
1& 12K & 161 & 1.1 & 32.9 & 6.2 & 9K & 187 & 1.0 & 22.5 & 9.4\\
2& 72K & 321 & 1.0 & 35.3 & 3.1 & 46K & 373 & 0.9 & 24.7 & 4.7\\
3& 476K & 641 & 0.9 & 39.0 & 1.6 & 308K & 742 & 0.8 & 27.3 & 2.3\\
4& 3.7M & 1281 & 0.8 & 40.6 & 0.8 & 2.2M & 1481 & 0.8 & 27.0 & 1.2\\
5& 6.8M & 1601 & 0.7 & 40.8 & 0.6 & 7.4M & 2220 & 0.8 & 27.0 & 0.8\\
\hline
\hline
1& 11K & 9 & 0.2 & 1.7 & 125.0 & 5K & 16 & 0.2 & 1.7 & 122.5\\
2& 59K & 17 & 0.2 & 1.9 & 62.5 & 30K & 31 & 0.2 & 2.0 & 61.2\\
3& 375K & 33 & 0.2 & 2.1 & 31.2 & 194K & 59 & 0.2 & 2.2 & 30.6\\
4& 2.7M & 65 & 0.2 & 2.1 & 15.6 & 1.4M & 114 & 0.2 & 2.3 & 15.5\\
5& 8.5M & 97 & 0.2 & 2.5 & 10.4 & 4.4M & 169 & 0.2 & 3.2 & 10.4\\
\hline
\end{tabular}
\label{tab:fem_mesh}
}
\end{center}
\end{table}


%
\bibliographystyle{siam}
\bibliography{tdod}
\end{document}